\documentclass[english,final]{amsart}
\usepackage[T1]{fontenc}
\usepackage[latin1]{inputenc}
\usepackage[english]{babel}
\usepackage{amsmath,amsfonts,amssymb,amsthm,enumitem}
\usepackage{mathtools}
\usepackage{mathrsfs}
\usepackage{graphicx}

\usepackage{bbold}

\newcommand*{\MRref}[2]{\linebreak[0] \href{http://www.ams.org/mathscinet-getitem?mr=#1}{MR \textbf{#1}}}

\usepackage[pdftitle={The K-theoretical range of Cuntz-Krieger algebras},
  pdfauthor={Arklint, Bentmann, Katsura},
]{hyperref}
\usepackage[lite]{amsrefs}
\usepackage{microtype}

\usepackage{soul}

\usepackage{xcolor}

\numberwithin{equation}{section}

\theoremstyle{plain}
\newtheorem{theorem}[equation]{Theorem}
\newtheorem{proposition}[equation]{Proposition}
\newtheorem{lemma}[equation]{Lemma}
\newtheorem{corollary}[equation]{Corollary}

\theoremstyle{definition}
\newtheorem{definition}[equation]{Definition}

\theoremstyle{remark}
\newtheorem{remark}[equation]{Remark}

\renewcommand{\phi}{\varphi}
\renewcommand{\epsilon}{\varepsilon}
\newcommand{\Z}{\mathbb Z}

\newcommand{\csa}{\texorpdfstring{$C^*$\nb-al\-ge\-bra}{C*-algebra}}
\newcommand{\csas}{$C^*$\nb-al\-ge\-bras}

\newcommand{\CK}{Cuntz--Krie\-ger algebra}
\newcommand{\CKs}{Cuntz--Krie\-ger algebras}

\renewcommand{\subset}{\subseteq}

\newcommand*{\K}{\textup{K}}

\newcommand{\I}[1]{\mathbb I_{#1}}

\DeclareMathOperator{\im}{im}

\DeclareMathOperator{\coker}{coker}

\newcommand{\Op}{\mathbb O}

\newcommand{\LC}{\mathbb{LC}}

\DeclareMathOperator{\FK}{FK}

\newcommand{\Mod}{\mathfrak{Mod}}
\DeclareMathOperator{\Prim}{Prim}

\newcommand{\onto}{\twoheadrightarrow}
\newcommand{\into}{\hookrightarrow}

\DeclareMathOperator{\clo}{CP}
\newcommand{\ob}{\widetilde\partial{}}

\newcommand{\Ab}{\mathfrak{Ab}}

\DeclareMathOperator{\cok}{coker}
\DeclareMathOperator{\rank}{rank}
\newcommand{\unit}{\textnormal{unit}}
\newcommand{\pt}{\textnormal{pt}}

\newcommand{\Catgunnar}{\mathcal{R}}

\DeclareMathOperator{\Path}{Path}
\DeclareMathOperator{\DoublePaths}{DP}

\newcommand{\FKgunnar}{\FK_\Catgunnar}

\newcommand{\obd}[1]{\widetilde{\partial}\{{#1}\}}

\newcommand{\osi}[1]{\widetilde{\{{#1}\}}}

\newcommand{\si}[1]{\{{#1}\}}

\newcommand{\gsi}[1]{{#1}_{1}}
\newcommand{\gobd}[1]{\widetilde\partial{#1}_{0}}
\newcommand{\gosi}[1]{\widetilde{{#1}}_0}

\newcommand{\rsi}[1]{{#1}_0}

\newcommand*{\nb}{\nobreakdash}
\newcommand*{\Cstar}{\texorpdfstring{$\textup C^*$\nb-}{C*-}}
\newcommand*{\Star}{\texorpdfstring{$^*$\nobreakdash-}{*-}}
\newcommand{\shom}{$^*$\nobreakdash-homo\-mor\-phism}
\newcommand*{\cstaralg}{\mathfrak{C^*alg}}

\newcommand*{\defeq}{\mathrel{\vcentcolon=}}

\input xy
\xyoption{all}
\newdir{ >}{{}*!/-5pt/@{>}}

\title{The $\K$\nb-theoretical range of Cuntz--Krie\-ger algebras}

\author{Sara E. Arklint}
\address{Department of Mathematical Sciences, University of Copenhagen, Uni\-versi\-tets\-park\-en~5, DK-2100 Copenhagen, Denmark}
\email{arklint@math.ku.dk}

\author{Rasmus Bentmann}
\address{Mathematisches Institut\\
  Georg-August Universit\"at G\"ottingen\\
  Bunsenstra\ss{}e 3--5\\
  37073 G\"ottingen\\
  Germany}
\email{rbentma@uni-math.gwdg.de}

\author{Takeshi Katsura}
\address{Department of Mathematics, Keio University, 3-14-1 Hiyoshi, Kouhoku-ku, Yokohama 223-8522, Japan}
\email{katsura@math.keio.ac.jp }

\thanks{This research was supported by the Danish National Research Foundation through the Centre for Symmetry and Deformation (DNRF92). The third-named author was partially supported by the Japan Society for the Promotion of Science.}
\keywords{Cuntz--Krie\-ger algebras, classification, filtered $\K$\nb-theory}
\subjclass[2010]{46L35, 46L80, (46L55)}

\begin{document}
\bibliographystyle{alpha}

\begin{abstract}
We augment Restorff's classification of purely infinite \CKs{} by describing the range of his invariant on purely infinite \CKs{}. We also describe its range on purely infinite graph \csas{} with finitely many ideals, and provide `unital' range results for purely infinite \CKs{} and unital purely infinite graph \csas{}.
\end{abstract}

\maketitle

\section{Introduction}

\CKs{} form a class of \csas{} closely related to symbolic dynamics~\cites{cuntz_krieger,cuntz}. Based on this relationship, classification results for purely infinite \CKs{} by $\K$\nb-theoretical invariants have been established by Mikael R{\o{}}rdam in the simple case~\cite{Rordam:CK} and by Gunnar Restorff in the case of finitely many ideals~\cite{restorff}.

For simple \CKs{}, the $\K_0$-group suffices for classification (because the $\K_1$-group can be identified with the free part of the $\K_0$-group). Moreover, it is known that every finitely generated abelian group arises as the $\K_0$-group of some simple \CK{}~\cite{ektw}*{Proposition~3.9}.

The invariant in Restorff's classification theorem for non-simple purely infinite \CKs{} is called \emph{reduced filtered $\K$\nb-theory}; we denote it by~$\FKgunnar$. Being an almost precise analogue of the $\K$\nb-web of Boyle and Huang~\cite{boyle_huang} in the world of \csas{}, it comprises the $\K_0$-groups of certain distinguished ideals and the $\K_1$-groups of all simple subquotients, along with the action of certain natural maps.

The first aim of this article is to clarify the definition of the target category of reduced filtered $\K$\nb-theory. We define a certain pre-additive category~$\Catgunnar$ such that~$\FKgunnar$ becomes a functor to $\Catgunnar$\nb-mod\-ules in a natural way. Our second aim is then to determine the class of $\Catgunnar$\nb-mod\-ules that arise (up to isomorphism) as the reduced filtered $\K$\nb-theory of some (tight, purely infinite) Cuntz--Krie\-ger algebra. This involves a natural exactness condition, as well as some conditions that translate well-known $\K$\nb-theoretical properties of purely infinite \CKs{}.

A \CK{} is purely infinite if and only if it has finitely many ideals, and if and only if it has real rank zero~\cite{hongszymanski}. For a \csa{} with real rank zero, the exponential map in the $\K$\nb-theoretical six-term exact sequence for every inclusion of subquotients vanishes~\cite{brownpedersen}. This fact is crucial to our definitions and results, in this article and in the companion article~\cite{reduction}.

Our work is based on the article~\cite{ektw} of Eilers, Katsura, Tomforde and West who characterized the six-term exact sequences in $\K$\nb-theory of \CKs{} with a unique non-trivial proper ideal. We obtain our result by a careful inductive application of the result in~\cite{ektw}. 

Combining our range result for purely infinite \CKs{} with Restorff's classification theorem, we obtain an explicit natural description of the set of stable isomorphism classes of purely infinite \CKs{}, completing the picture in a way previously known only in the simple case and the one-ideal case.

Anticipating potential future classification results generalizing Restorff's theorem, we also provide a range result for purely infinite graph \csas{} with finitely many ideals.
\CK{}s may be viewed as a specific type of graph \csa{}s, namely those arising from finite graphs with no sources~\cite{arklint_ruiz}. Indeed, our range result is established by graph \csa{}ic methods.  All purely infinite graph \csa{}s have real rank zero~\cite{hongszymanski}, so that the abovementioned $\K$\nb-theoretical particularities that make our approach work are still present in this more general setting.

Finally, we equip reduced filtered $\K$\nb-theory with a unit class and establish corresponding range results for purely infinite \CKs{} and unital purely infinite graph \csas{}. In~\cite{reduction}, this is used to give an ``external characterization'' of purely infinite \CK{}s under some conditions on the ideal structure.

\subsection{Acknowledgements}
Most of this work was done while the third-named author stayed at the University of Copenhagen. He would like to thank the people in Copenhagen for their hospitality. The authors are grateful to S{\o{}}ren Eilers for his encouragement and valuable comments, to Gunnar Restorff for suggesting improvements in exposition on an earlier version of the article, and to Efren Ruiz and the anonymous referee for a number of corrections.

\section{Preliminaries} \label{sec:notation}

In this article, matrices act from the right and the composite of maps $A\xrightarrow{f} B\xrightarrow{g} C$ is denoted by~$fg$.
The category of abelian groups is denoted by $\Ab$, the category of $\Z/2$-graded abelian groups by $\Ab^{\mathbb Z/2}$.
We let $\Z_+$ denote the set of non-negative integers. When $S$ is a set, we use the symbol $M_S$ to indicate the set of square matrices whose rows and columns are indexed by elements in~$S$.

\subsection{Finite spaces} \label{sec:finite_spaces}
Throughout the article, let $X$ be a finite $T_0$-space, that is, a finite topological space in which no two different points have the same open neighbourhood filter.
For a subset $Y$ of $X$, 
we let $\overline Y$ denote the closure of $Y$ in $X$, and 
we let $\overline\partial Y$ denote the boundary 
$\overline Y \setminus Y$ of $Y$. 
Since $X$ is a finite space, 
there exists a smallest open subset $\widetilde Y$ of $X$ containing $Y$.
We let $\ob Y$ denote the set $\widetilde Y\setminus Y$.

For $x,y\in X$ we write $x\leq y$ 
when $\overline{\{x\}} \subset \overline{\{y\}}$, 
and $x < y$ when $x\leq y$ and $x \neq y$. 
We write $y\to x$ when $x < y$ and 
no $z \in X$ satisfies $x < z < y$.
The following lemma is straightforward to verify.

\begin{lemma}\label{Lem:on_previous}
For an element $x \in X$, the following hold:
\begin{enumerate}[label=\textup{(\arabic*)}]
\item An element $y \in X$ satisfies $y\to x$ if and only if 
$y$ is a closed point of $\ob \{x\}$.
\item We have $\ob \{x\} = \displaystyle\bigcup_{y\to x} \widetilde{\{y\}}$, 
and consequently $\ob \{x\}$ is open. 
\item An element $y \in X$ satisfies $x \leq y$ if and only if 
there exists a finite sequence $(z_k)_{k=1}^n$ in $X$ 
such that $z_{k+1}\to z_{k}$ for $k=1, \ldots, n-1$ 
where $z_1=x$, $z_{n}=y$. 
\end{enumerate}
\end{lemma}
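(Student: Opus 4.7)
The plan is to translate everything into the specialization order on $X$: since $\widetilde{\{x\}}$ is by definition the intersection of all open neighbourhoods of $x$, a direct argument shows that $y \in \widetilde{\{x\}}$ iff every open containing $x$ also contains $y$, which is equivalent to $x \in \overline{\{y\}}$, i.e.\ to $x \leq y$. Consequently $\ob\{x\} = \{y \in X : x < y\}$ and $\overline{\{y\}} = \{z \in X : z \leq y\}$. Parts (1) and (2) then become combinatorial statements about the finite poset $(X,\leq)$.

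For (1), I would compute the closure of $\{y\}$ inside the subspace $\ob\{x\}$: it equals $\overline{\{y\}} \cap \ob\{x\} = \{z \in X : x < z \leq y\}$, which reduces to $\{y\}$ exactly when no $z$ satisfies $x < z < y$. Granted $y \in \ob\{x\}$, i.e.\ $x < y$, this is the definition of $y \to x$. For (2), the inclusion $\supseteq$ is immediate, since $y \to x$ and $w \in \widetilde{\{y\}}$ force $x < y \leq w$. For $\subseteq$, given $w \in \ob\{x\}$ I would pick $y$ minimal in the finite nonempty set $\{z \in X : x < z \leq w\}$; any $z'$ with $x < z' < y$ would contradict minimality, so $y \to x$, and by construction $w \in \widetilde{\{y\}}$. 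Openness of $\ob\{x\}$ follows since each $\widetilde{\{y\}}$ is open.

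For (3), the direction $\Leftarrow$ is clear from transitivity of $\leq$ (with $n = 1$ handling the case $x = y$). For $\Rightarrow$, I would induct on $N \defeq |\{z \in X : x \leq z \leq y\}|$. When $N = 1$ we have $x = y$ and $n = 1$ suffices. Otherwise $x < y$; selecting a minimal $y'$ in $\{z : x < z \leq y\}$ as in (2) yields $y' \to x$ with $y' \leq y$, and the interval $\{z : y' \leq z \leq y\}$ has cardinality strictly less than $N$ (it omits $x$), so the inductive hypothesis applied to the pair $(y', y)$ produces a chain from $y'$ up to $y$, to which we prepend $x$. The whole argument is a routine translation of the lemma into poset language; the only point requiring a bit of care is choosing an inductive measure in (3), and the cardinality of the closed interval from $x$ to $y$ serves the purpose.
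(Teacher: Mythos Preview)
Your proof is correct. The paper does not supply a proof of this lemma at all; it simply declares the statement ``straightforward to verify'' and moves on. Your argument---identifying $\widetilde{\{x\}}$ with the up-set $\{y : x \leq y\}$ in the specialization order and then reducing each part to elementary poset combinatorics---is exactly the natural verification the authors are leaving to the reader, and all three parts are handled cleanly.
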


We call a sequence $(z_k)_{k=1}^n$ 
as in Lemma~\ref{Lem:on_previous}(3) a \emph{path} from $y$ to $x$. 
We denote by $\Path(y,x)$ the set of paths from $y$ to $x$.
Thus Lemma~\ref{Lem:on_previous}(3) can be rephrased as follows: 
two points $x,y \in X$ satisfy $x \leq y$ if and only if 
there exists a path from $y$ to $x$. 
Such a path is not unique in general.
Two points $x,y \in X$ satisfy $y\to x$ 
if and only if $(x,y)$ is a path from $y$ to $x$;
in this case, there are no other paths from $y$ to $x$.

\subsection{\Cstar{}algebras over finite spaces}
Recall from~\cite{MN:Bootstrap}, that a \emph{\csa{} $A$ over~$X$} is a \csa{} $A$ equipped with a continuous map $\Prim(A)\to X$ or, equivalently, an infima- and suprema-preserving map $\Op(X)\to\I{}(A), U\mapsto A(U)$ mapping open subsets in~$X$ to (closed, two-sided) ideals in~$A$ (in particular, one has $A(\emptyset)=0$ and $A(X)=A$).
The \csa{} $A$ is called \emph{tight} over $X$ if this map is a lattice isomorphism.
A \shom{} $\phi\colon A\to B$ for \csa s $A$ and $B$ over $X$ is called \emph{$X$-equivariant} if $\phi\bigl(A(U)\bigr)\subseteq B(U)$ for all $U\in\Op(X)$.
The category of \csas{} over~$X$ and $X$\nb-equivariant \shom{}s is denoted by $\cstaralg(X)$.

Let $\LC(X)$ denote the set of locally closed subsets of $X$, that is, subsets of the form $U\setminus V$ with $U$ and $V$ open subsets of $X$ satisfying $V\subseteq U$.
For $Y\in\LC(X)$, and $U,V\in\Op(X)$ satisfying that $Y=U\setminus V$ and $U\supseteq V$, we define $A(Y)$ as $A(Y)=A(U)/A(V)$, which up to natural isomorphism is independent of the choice of $U$ and $V$ (see \cite{MN:Bootstrap}*{Lemma 2.15}).
For a \csa{} $A$ over $X$, 
the $\Z/2$-graded abelian group $\FK_Y^*(A)$ 
is defined as $\K_*\bigl(A(Y)\bigr)$ for all $Y\in\LC(X)$.  
Thus $\FK_Y^*$ is a functor from $\cstaralg(X)$
to the category $\Ab^{\mathbb Z/2}$ of $\Z/2$-graded abelian groups;
compare \cite{MN:Filtrated}*{\S2}.

\begin{definition}  \label{def:generators}
Let $Y\in\LC(X)$, $U\subset Y$ be open and set $C=Y\setminus U$. 
A pair $(U,C)$ obtained in this way is called a \emph{boundary pair}.
The natural transformations occuring in the six-term exact sequence in $\K$\nb-theory
for the distinguished subquotient inclusion  associated to $U\subset Y$ are denoted by $i_U^Y$, $r_Y^C$ and $\delta_C^Y$:
\[
\xymatrix{
\FK_U^* \ar[rr]^-{i_U^Y} && \FK_Y^*.\ar[dl]^-{r_Y^C} \\
& \FK_C^*\ar[ul]|\circ^-{\delta_C^U} &
}
\]
\end{definition}

\begin{lemma}
  \label{lem:relations_for_gunnar}
Let $(U,C)$ be a boundary pair and let $V\subseteq U$ be an open subset. The following relations hold: 
\begin{enumerate}[label=\textup{(\arabic*)}]
\item $\delta_C^U i_U^Y = 0$;
\item $i_V^U i_U^Y = i_V^Y$.
\end{enumerate}
\end{lemma}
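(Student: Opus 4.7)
The plan is to reduce both relations to standard facts about the six-term exact sequence and functoriality of $\K$\nb-theory, so the real work is only to unwrap the definition of the maps for a subquotient rather than an ordinary ideal.

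For~(1), observe that $A(U)$ is an ideal of $A(Y)$ with quotient $A(C)$ (this is essentially the content of the definition of $\FK_Y$ for locally closed $Y$: writing $Y = W\setminus W'$ with $W'\subset W$ both open in~$X$, and letting $W_U$ be the open set with $W'\subset W_U\subset W$ and $U=W_U\setminus W'$, one has $A(U)=A(W_U)/A(W')$ and $A(Y)=A(W)/A(W')$, and $A(C)$ is the quotient). Thus the maps $i_U^Y$, $r_Y^C$, $\delta_C^U$ are by definition exactly the maps in the six-term exact sequence associated with $0\to A(U)\to A(Y)\to A(C)\to 0$. The relation $\delta_C^U i_U^Y=0$ is then just the fact that two consecutive maps in an exact sequence compose to zero.

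For~(2), note first that $V$ is open in $U$ and $U$ is open in $Y$, so $V$ is open in $Y$, making all three inclusion-maps $i_V^U$, $i_U^Y$, $i_V^Y$ well defined in the sense above. Concretely, with the presentation $Y=W\setminus W'$ and $V = W_V\setminus W'\subset U = W_U\setminus W'\subset Y$ used for~(1), we have a commutative diagram of ideal inclusions
\[
\xymatrix{
A(V)\ar@{^{(}->}[r]\ar@{^{(}->}[dr] & A(U)\ar@{^{(}->}[d]\\
& A(Y)
}
\]
in which the diagonal is the ideal inclusion inducing $i_V^Y$, while the two legs induce $i_V^U$ and $i_U^Y$ respectively. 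Applying the functor $\K_*$ to this diagram yields $i_V^U i_U^Y = i_V^Y$ at once.

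The only point requiring care is therefore the bookkeeping in~(2) that the various subquotient descriptions of $A(V)$, $A(U)$, $A(Y)$ are compatible in the sense required for the commutative triangle above; once this is recorded, both relations are essentially tautological, and there is no genuine obstacle to overcome.
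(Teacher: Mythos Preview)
Your proof is correct and follows essentially the same approach as the paper: relation~(1) is deduced from the exactness of the six-term sequence, and relation~(2) from the commutativity of the triangle of ideal inclusions $A(V)\hookrightarrow A(U)\hookrightarrow A(Y)$ before passing to $\K$\nb-theory. Your write-up simply makes explicit the subquotient bookkeeping that the paper leaves implicit.
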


\begin{proof}
The first statement follows from the exactness of the six-term sequence in $\K$\nb-theory. The second statement already holds for the ideal inclusions inducing the relevant maps on $\K$\nb-theory.
\end{proof}

\subsection{Graph \csa{}s}
We follow the notation and definition for graph \csas{} of Iain Raeburn's monograph~\cite{raeburn}; this is also our reference for basic facts about graph \csas{}.
All graphs are assumed to be countable and to satisfy Condition (K), hence all considered graph \csas{} are separable and of real rank zero \cite{hongszymanski}*{Theorem~2.5}.

\begin{definition}[] \label{def:graph}
Let $E = (E^0,E^1,s,r)$ be a countable directed graph.
The \emph{graph \csa{}} $C^*(E)$ is defined as the universal \csa{} generated by
a set of mutually orthogonal projections $\{ p_v \mid v \in E^0 \}$ and a set $\{ s_e \mid e \in E^1 \}$ of partial isometries satisfying the relations

\begin{itemize}
	\item $s_e^* s_f = 0$ if $e,f \in E^1$ and $e \neq f$,
	\item $s_e^* s_e = p_{s(e)}$ for all $e \in E^1$,
	\item $s_e s_e^* \leq p_{r(e)}$ for all $e \in E^1$, and,
	\item $p_v = \sum_{e \in r^{-1}(v)} s_e s_e^*$ for all $v \in E^0$ with $0 < |r^{-1}(v)| < \infty$.
\end{itemize}
\end{definition}

The graph \csa{} $C^*(E)$ is a \CK{} if and only if the graph~$E$ is finite with no sources, see~\cite{arklint_ruiz}.

\begin{definition}
Let $E$ be a directed graph.
An edge $e\in E^1$ in $E$ is called a \emph{loop} if $s(e)=r(e)$.
A vertex $v\in E^0$ in $E$ is called \emph{regular} if $r^{-1}(v)$ is finite and nonempty.
\end{definition}

If all vertices in $E$ support two loops, then $C^*(E)$ is purely infinite, see~\cite{hongszymanski}*{Theorem~2.3}.

\begin{definition}
Let $E$ be a directed graph.  
A \emph{path of length $n$} in $E$ is a finite sequence $e_1\cdots e_n$ of edges $e_i\in E^1$ with $s(e_i)=r(e_{i+1})$ for all $i$. Vertices in $E$ are regarded as \emph{paths of length $0$}.
A path $e_1\cdots e_n$ is called a \emph{return path} if $r(e_i)\neq r(e_1)$ for all $i\neq 1$.

For vertices $v,w$ in $E$, we write $v\geq w$ if there is a path in $E$ from $v$ to $w$, i.e., a path $e_1\cdots e_n$ with $s(e_n)=v$ and $r(e_1)=w$. In particular, $v\geq v$ for all $v\in E^0$.
\end{definition}

\begin{definition}
Let $E$ be a directed graph.
Let $H$ be a subset of $E^{0}$.  The subset $H$ is called \emph{saturated} if $s(r^{-1}(v))\subseteq H$ implies $v\in H$ for all regular vertices $v$ in $E$.   When $H$ is saturated,  we let $I_H$ denote the ideal in $C^*(E)$ generated by $\{ p_{v} \mid v \in H \}$.
The subset $H$  is called \emph{hereditary} if for all $w\in H$ and $v\in E^0$, $v\geq w$ implies $v\in H$. 

A vertex $v\in E^0$ in $E$ is called a \emph{breaking vertex} with respect to the saturated hereditary subset $H$ if $s(r^{-1}(v))\cap H$ is infinite and $s(r^{-1}(v))\setminus H$ is finite and nonempty.
\end{definition}

\begin{definition}
Let $E$ be a directed graph.  
We say that $E$ satisfies \emph{Condition \textup{(}K\textup{)}} if for all edges $v\in E^0$ in $E$, either there is no path of positive length in $E$ from~$v$ to~$v$ or there are at least two distinct return paths of positive length in $E$ from~$v$ to~$v$.  We call $E$ \emph{row-finite} when $r^{-1}(v)$ is finite for all $v\in E^0$.
\end{definition}

When $E$ satisfies Condition \textup{(}K\textup{)} and no saturated hereditary subsets in $E^0$ have breaking vertices, then the map $H\mapsto I_H$ defines a lattice isomorphism between the saturated hereditary subsets in $E^0$ and the ideals in $C^*(E)$, see~\cite{drinentomforde}*{Theorem~3.5}.

\section{Reduced filtered \texorpdfstring{$\K$}{K}-theory} \label{gunnar}

In this section we introduce the functor $\FKgunnar$ which (for real-rank-zero \csa s) is equivalent to the reduced filtered $\K$\nb-theory defined by Gunnar Restorff in~\cite{restorff}.

\begin{definition}
Let $\Catgunnar$ denote the universal pre-additive category generated by objects $\gsi x, \gobd x, \gosi x $ for all $x\in X$ and morphisms $\delta_{\gsi x}^{\gobd x}$ and $i_{\gobd x}^{\gosi x}$ for all $x\in X$, and $i_{\gosi y}^{\gobd x}$ when $y\to x$, subject to the relations
  \label{def:Catgunnar}
\begin{equation}
  \label{eq:gunnarrelation1}
\delta_{\gsi x}^{\gobd x} i_{\gobd x}^{\gosi x} =0
\end{equation}
\begin{equation}
  \label{eq:gunnarrelation2}
 i_p i_{\gosi{y(p)}}^{\gobd x} =  i_q i_{\gosi{y(q)}}^{\gobd x}
\end{equation}
for all $x\in X$, all $y\in X$ satisfying $y> x$, and all paths $p,q\in\Path(y,x)$, where for a path $p=(z_k)_{k=1}^n$ in $\Path(y,x)$, we define $y(p)=z_2$, and
\[
i_p = i_{\gosi{z_n}}^{\gobd{z_{n-1}}} i_{\gobd{z_{n-1}}}^{\gosi{z_{n-1}}}\cdots i_{\gosi{z_3}}^{\gobd{z_2}} i_{\gobd{z_2}}^{\gosi{z_2}}.
\]
Here subscripts indicate domains of morphisms and superscripts indicate co\-domains.
\end{definition}

\begin{definition}[\emph{Reduced filtered $\K$\nb-theory}]
The functor \[\FKgunnar\colon\cstaralg(X)\to\Mod(\Catgunnar)\] is defined as follows:
For a \csa{} $A$ over $X$ and $x\in X$, we define
\begin{align*}
\FKgunnar(A)(\gsi x) &= \FK_{\si x}^1(A)\\
\FKgunnar(A)(\gobd x) &= \FK_{\obd x}^0(A)\\
\FKgunnar(A)(\gosi x) &= \FK_{\osi x}^0(A)
\end{align*}
using the notation from \S\ref{sec:finite_spaces}, and for a morphism~$\eta$ in~$\Catgunnar$, we set $\FKgunnar(A)(\eta)$ to be the corresponding map constructed in Definition~\ref{def:generators}.
On an $X$-equivariant \Star{}homomorphism, $\FKgunnar$ acts in the obvious way dictated by its entry functors $\FK_Y$.
It follows from Lemma \ref{lem:relations_for_gunnar} that the functor~$\FKgunnar$ indeed takes values in $\Catgunnar$\nb-mod\-ules.
\end{definition}

\begin{remark}
We would like to make the reader aware of the following slight subtlety. It may happen that $\obd x = \osi{y}$ for two points $x,y\in X$. But, if $M$ is an exact $\Catgunnar$\nb-mod\-ule in the sense of the definition below, then the map $i_{\gosi y}^{\gobd x}\colon M(\gosi y)\to M(\gobd x)$ is an isomorphism. More generally, if $\obd x$ decomposes as a disjoint union $\bigsqcup_{i=1}^n \osi{y_i}$, then there is an isomorphism $\bigoplus_{i=1}^n M(\gosi{y_i})\to M(\gobd x)$.
\end{remark}

\begin{definition}
For an element $x$ in $X$, let $\DoublePaths(x)$ denote the set of pairs of distinct paths $(p,q)$ in $X$ to $x$ and from some common element which is denoted by $s(p,q)$.
An $\Catgunnar$\nb-mod\-ule $M$ is called \emph{exact} if the sequences
\begin{equation}
  \label{eq:Catgunnarrel1}
M(\gsi x) \xrightarrow{\delta_{\gsi x}^{\gobd x}} M(\gobd x) \xrightarrow{i_{\gobd x}^{\gosi x}} M(\gosi x)
\end{equation}
\begin{equation}
  \label{eq:Catgunnarrel2}
\bigoplus_{(p,q)\in \DoublePaths(x)} M(\gosi{s(p,q)}) \xrightarrow{(i_p-i_q)} \bigoplus_{y\to x} M(\gosi y) \xrightarrow{(i_{\gosi y}^{\gobd x})} M(\gobd x) \longrightarrow 0
\end{equation}
are exact for all $x\in X$.
\end{definition}

\begin{lemma} \label{lem:exactseq}
Let $A$ be a \csa{} over~$X$ with real rank zero.
Let $Y$ be an open subset of $X$ and let $(U_i)_{i\in I}$ be an open covering of $Y$ satisfying $U_i\subseteq Y$ for all $i\in I$. Then the following sequence is exact:
\[
\bigoplus_{i,j\in I} \FK^0_{U_i\cap U_j}(A)\xrightarrow{(i_{U_i\cap U_j}^{U_i}-i_{U_i\cap U_j}^{U_j})}
\bigoplus_{i\in I} \FK^0_{U_i}(A)\xrightarrow{(i_{U_i}^Y)}
\FK^0_Y(A) \longrightarrow 0.
\]
\end{lemma}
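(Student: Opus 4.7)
My plan is to induct on the cardinality of $I$, which may be taken finite since $X$ is a finite space. The base case $|I|=1$ is immediate: $Y = U_1$, so the right-hand map is the identity on $\K_0(A(Y))$, and the only term on the left contributes zero.

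The heart of the argument is the case $|I|=2$, a Mayer--Vietoris statement for two ideals $J_k \defeq A(U_k)$. Because $U \mapsto A(U)$ preserves finite suprema and infima, $A(U_1 \cup U_2) = J_1 + J_2$ and $A(U_1 \cap U_2) = J_1 \cap J_2$. I will consider the two short exact sequences
\[
0 \to J_1 \cap J_2 \to J_1 \to J_1/(J_1 \cap J_2) \to 0, \qquad 0 \to J_2 \to J_1 + J_2 \to (J_1 + J_2)/J_2 \to 0,
\]
both of which are inclusions of subquotients of $A$, together with the canonical isomorphism $J_1/(J_1 \cap J_2) \cong (J_1 + J_2)/J_2$ furnished by the second isomorphism theorem. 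The real-rank-zero hypothesis makes the $\K$\nb-theoretic boundary maps in both six-term sequences vanish, yielding short exact sequences on $\K_0$. A diagram chase---lift $\gamma \in \K_0(J_1 + J_2)$ via its image in $\K_0((J_1 + J_2)/J_2) \cong \K_0(J_1/(J_1 \cap J_2))$ to some $\alpha \in \K_0(J_1)$, then absorb the remainder $\gamma - \alpha$ into $\K_0(J_2)$---produces the required Mayer--Vietoris exactness.

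For the inductive step at $|I|=n \geq 3$, pick any $k \in I$ and set $U = U_k$, $V = \bigcup_{i \neq k} U_i$. Apply the $|I|=2$ case to the covering $\{U, V\}$ of $Y$, the induction hypothesis to the cover $(U_i)_{i \neq k}$ of $V$, and the induction hypothesis to the cover $(U \cap U_i)_{i \neq k}$ of $U \cap V$. A bookkeeping diagram chase, sorting pairs $(i, j) \in I \times I$ by whether either index equals $k$, pieces these three right-exact sequences into the desired exactness for the full cover.

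The principal obstacle is the $|I|=2$ case, specifically establishing surjectivity of $\K_0(J_1) \oplus \K_0(J_2) \to \K_0(J_1 + J_2)$. This requires vanishing of the index-type boundary map in the six-term sequence; together with the vanishing of the exponential map emphasized in the introduction, both come from real rank zero (via the lifting of projections from quotients). Once this is in hand, both the $|I|=2$ diagram chase and the inductive bookkeeping are routine.
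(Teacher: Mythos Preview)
Your approach---induction on $|I|$ reducing to a Mayer--Vietoris argument for two ideals, proved by chasing the six-term sequences for $J_1\cap J_2\hookrightarrow J_1$ and $J_2\hookrightarrow J_1+J_2$---is exactly the paper's. One point needs correction, however: you assert that real rank zero makes \emph{both} connecting maps vanish, ``yielding short exact sequences on $\K_0$,'' and later invoke vanishing of an ``index-type boundary map.'' Real rank zero (projection lifting) only kills the exponential map $\K_0(\text{quotient})\to\K_1(\text{ideal})$; the index map $\K_1(\text{quotient})\to\K_0(\text{ideal})$ need not vanish, so $\K_0(J_2)\to\K_0(J_1+J_2)$ need not be injective. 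The paper is explicit that only the exponential map is used.

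This does not break your argument, but you should be aware of where the chase actually lands. Your surjectivity step as written uses only the surjection $\K_0(J_1)\twoheadrightarrow\K_0(J_1/(J_1\cap J_2))$, which is exactly exponential vanishing. For exactness at the middle, you cannot conclude $\alpha_2+i(\delta)=0$ from its vanishing in $\K_0(J_1+J_2)$; instead note that $\alpha_2+i(\delta)$ lies in the image of the index map $\K_1\bigl((J_1+J_2)/J_2\bigr)\to\K_0(J_2)$, and use naturality of the index map along the morphism of extensions (induced by $J_1\cap J_2\hookrightarrow J_2$, $J_1\hookrightarrow J_1+J_2$) to correct $\delta$ by an element of $\ker\bigl(\K_0(J_1\cap J_2)\to\K_0(J_1)\bigr)$. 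With that adjustment the two-set case goes through, and your inductive bookkeeping is fine.
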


\begin{proof}
Using an inductive argument as in \cite{Bredon}*{Proposition 1.3}, we can reduce to the case that $I$ has only two elements.
In this case, exactness follows from a straightforward diagram chase using the exact six-term sequences of the involved ideal inclusions. Here we use that the exponential map $\FK_{V\setminus U}^0(A)\to\FK_U^1(A)$ vanishes for every closed subset~$U$ of a locally closed subset~$V$ of~$X$ if $A$ has real rank zero \cite{brownpedersen}*{Theorem~3.14}.
\end{proof}

\begin{corollary} \label{cor:FKgunnarexact}
Let $A$ be a \csa{} over~$X$ with real rank zero. Then $\FKgunnar(A)$ is an exact $\Catgunnar$\nb-mod\-ule.
\end{corollary}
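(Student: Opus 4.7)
The plan is to verify exactness of the two defining sequences at each $x \in X$ for $M \defeq \FKgunnar(A)$. For \eqref{eq:Catgunnarrel1}, exactness is essentially tautological: $(\obd x, \si x)$ is a boundary pair in $\osi x$, and
\[
\FK^1_{\si x}(A) \xrightarrow{\delta_{\gsi x}^{\gobd x}} \FK^0_{\obd x}(A) \xrightarrow{i_{\gobd x}^{\gosi x}} \FK^0_{\osi x}(A)
\]
is precisely the relevant portion of the six-term $\K$\nb-theory exact sequence associated to $A(\obd x) \trianglelefteq A(\osi x)$; no appeal to real rank zero is needed here.

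For \eqref{eq:Catgunnarrel2}, the strategy is to apply Lemma~\ref{lem:exactseq} twice. Combining Lemma~\ref{Lem:on_previous}(2), which provides the open cover $\obd x = \bigcup_{y \to x} \osi y$, with Lemma~\ref{lem:exactseq} yields the exact sequence
\[
\bigoplus_{y,y'} \FK^0_{\osi y \cap \osi{y'}}(A) \xrightarrow{\left(i_{\osi y \cap \osi{y'}}^{\osi y} - i_{\osi y \cap \osi{y'}}^{\osi{y'}}\right)} \bigoplus_{y} \FK^0_{\osi y}(A) \xrightarrow{\left(i_{\osi y}^{\obd x}\right)} \FK^0_{\obd x}(A) \longrightarrow 0,
\]
with all indices running over $\{y \in X : y \to x\}$. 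This immediately gives surjectivity onto $M(\gobd x)$. For the kernel in the middle, I would apply Lemma~\ref{lem:exactseq} a second time to each intersection, using that open subsets of the finite $T_0$-space $X$ are upward closed, so $\osi y \cap \osi{y'} = \bigcup_z \osi z$ with $z$ ranging over common successors of $y$ and $y'$. This yields $\bigoplus_z \FK^0_{\osi z}(A) \twoheadrightarrow \FK^0_{\osi y \cap \osi{y'}}(A)$, so any kernel element can be written as a sum of contributions $i_{\osi z}^{\osi y}(m) - i_{\osi z}^{\osi{y'}}(m)$ with $m \in \FK^0_{\osi z}(A)$.

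The final step identifies each such contribution with $i_p(m) - i_q(m)$ for a genuine pair $(p, q) \in \DoublePaths(x)$. Given $z \geq y$ with $y \to x$, I pick any path from $z$ to $y$ in $X$ and prepend $x$ to obtain some $p \in \Path(z, x)$ with $y(p) = y$; doing the same for $y'$ gives the required pair (in the degenerate cases $z = y$ or $z = y'$, $p$ is just the edge $y \to x$ and $i_p$ is the empty product, i.e.\ the identity). The key identification $i_p = i_{\osi z}^{\osi y}$ on $\K$\nb-theory holds because each two-factor block $i_{\gosi{z_{k+1}}}^{\gobd{z_k}} \, i_{\gobd{z_k}}^{\gosi{z_k}}$ in the definition of $i_p$ is induced by the composition of ideal inclusions $A(\osi{z_{k+1}}) \hookrightarrow A(\obd{z_k}) \hookrightarrow A(\osi{z_k})$, and these telescope to the direct inclusion $A(\osi z) \hookrightarrow A(\osi y)$. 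The reverse containment (image $\subseteq$ kernel) is the mirror observation: $i_p$ followed by $i_{\gosi{y(p)}}^{\gobd x}$ is induced on $\K$\nb-theory by $A(\osi{s(p)}) \hookrightarrow A(\obd x)$ regardless of the chosen path. The main obstacle will be this bookkeeping---matching the iterated \v{C}ech boundaries with the $\DoublePaths(x)$ parametrization and checking that trivial pairs (with $y = y'$, or with $p = q$) contribute zero and may be safely discarded.
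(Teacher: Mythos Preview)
Your proof is correct and follows essentially the same route as the paper's: both verify \eqref{eq:Catgunnarrel1} directly from the six-term sequence and obtain \eqref{eq:Catgunnarrel2} by applying Lemma~\ref{lem:exactseq} first to the cover $\obd x=\bigcup_{y\to x}\osi y$ and then a second time to each intersection $\osi y\cap\osi{y'}$, reindexing via paths. You spell out the telescoping identification $i_p=i_{\osi z}^{\osi y}$ (a consequence of Lemma~\ref{lem:relations_for_gunnar}(2)) and the handling of degenerate pairs more explicitly than the paper, which simply asserts that the second application yields the required surjection ``in a way making the obvious triangle commute''; but the argument is the same.
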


\begin{proof}
We verify the exactness of the desired sequences in $\FKgunnar(A)$.
The sequence \eqref{eq:Catgunnarrel1} is exact since it is part of the six-term sequence associated to the open inclusion $\widetilde{\partial}\{x\}\subset \widetilde{\{x\}}$.
To prove exactness of the sequence \eqref{eq:Catgunnarrel2}, we apply the previous lemma to the covering $(\widetilde{\{y\}})_{y\to x}$ of $Y=\widetilde{\partial}\{x\}$ and get the exact sequence
\begin{multline*}
\bigoplus_{y\to x, y'\to x} \FK_{\widetilde{\{y\}}\cap\widetilde{\{y'\}}}^0(A)
\xrightarrow{\left(i_{\widetilde{\{y\}}\cap\widetilde{\{y'\}}}^{\widetilde{\{y\}}}-i_{\widetilde{\{y\}}\cap\widetilde{\{y'\}}}^{\widetilde{\{y'\}}}\right)}
\bigoplus_{y\to x} \FK_{\widetilde{\{y\}}}^0(A) \\
\xrightarrow{\left(i_{\widetilde{\{y\}}}^{\widetilde\partial\{x\}}\right)}
\FK_{\widetilde{\partial}\{x\}}^0(A) \longrightarrow 0.
\end{multline*}
Another application of the previous lemma shows that $\displaystyle\bigoplus_{(p,q)\in \DoublePaths(x)} \FK_{\widetilde{s(p,q)}}^0(A)$ surjects onto $\displaystyle\bigoplus_{y\to x, y'\to x} \FK_{\widetilde{\{y\}}\cap\widetilde{\{y'\}}}^0(A)$ in a way making the obvious triangle commute. This establishes the exact sequence \eqref{eq:Catgunnarrel2}.
\end{proof}

\section{Range of reduced filtered \texorpdfstring{$\K$}{K}-theory} \label{range}

In this section, we determine the range of reduced filtered $\K$\nb-theory with respect to the class of purely infinite graph \csa{}s and, by specifying appropriate additional conditions, on the subclass of purely infinite \CK{}s. First, we recall relevant definitions and properties of graph \csa{}s, and explain how one can determine, for a graph~$E$, whether the graph \csa{} $C^*(E)$ can be regarded as a (tight) \csa{} over a given finite space $X$.  We also introduce a formula from~\cite{cet} for calculating reduced filtered $\K$\nb-theory of a graph \csa{} using the adjacency matrix of its defining graph. Finally, Proposition~\ref{prop:range} in conjunction with Theorem~\ref{thm:range} consitutes the desired range-of-invariant result.

\subsection{Calculating reduced filtered \texorpdfstring{$\K$\nb-theory}{K-theory} of a graph \csa{}} \label{ssec:calc}
Let $E$ be a countable graph and assume that all vertices in $E$ are regular and support at least two loops.
Then $E$ satisfies Condition \textup{(}K\textup{)} and has no breaking vertices, so since all subsets of $E^0$ are saturated, the map $H\mapsto I_H$ defines a lattice isomorphism from the hereditary subsets of $E^0$ to the ideals of $C^*(E)$.
Given a map $\psi\colon E^0\to X$ satisfying $\psi\bigl(s(e)\bigr)\geq\psi\bigl(r(e)\bigr)$ for all $e\in E^1$, $\psi(v)\geq\psi(w)$ holds for all $v,w\in E^0$ with $v\geq w$.
We may therefore define a structure on $C^*(E)$ as a \csa{} over $X$ by $U\mapsto I_{\psi^{-1}(U)}$ for $U\in\Op(X)$.

Assume that such a map $\psi$ is given, that is, that $C^*(E)$ is a \csa{} over $X$.
Define for each subset $F\subseteq X$ a matrix $D_F\in M_{\psi^{-1}(F)}(\Z_+)$ as $D_F=A_F-1$, where $A_F(v,w)$ is defined for $v,w\in\psi^{-1}(F)$ by
\[ A_F(v,w)= \#\{ e\in E^1 \mid r(e)=v, s(e)=w \},\]
the number of edges in~$E$ from~$w$ to~$v$; here $1$ denotes the identity matrix.  For subsets $S_1,S_2\subseteq F$, we let $D_F|_{S_1}^{S_2}$ denote the $S_1\times S_2$ matrix given by $D_F|_{S_1}^{S_2}(s_1,s_2)=D_F(s_1,s_2)$ for all $s_1\in S_1$ and $s_2\in S_2$.

Note that a given map $\psi\colon E^0\to X$ turns $C^*(E)$ into a \csa{} over~$X$ if and only if
$D_X|_{\psi^{-1}(y)}^{\psi^{-1}(z)}$ vanishes when $y\not\leq z$.
And if furthermore $D_X|_{\psi^{-1}(y)}^{\psi^{-1}(z)}$ is non-zero whenever $y<z$, then $C^*(E)$ is tight over~$X$ (this condition for tightness is sufficient but not necessary).

Let a map $\psi\colon E^0\to X$ satisfying $\psi\bigl(s(e)\bigr)\geq\psi\bigl(r(e)\bigr)$ for all $e\in E^1$ be given.
Then $\FKgunnar\bigl(C^*(E)\bigr)$ can be computed in the following way.
Let $Y\in\LC(X)$ and $U\in\Op(Y)$ be given, and define $C=Y\setminus U$.
Then by~\cite{cet}, the six-term exact sequence induced by $C^*(E)(U) \into C^*(E)(Y) \onto C^*(E)(C)$ is naturally isomorphic to the sequence
\begin{equation} \label{eq:sixtermsnake}
\begin{split}
\xymatrix{
\cok D_U \ar[r] & \cok D_Y \ar[r] & \cok D_C \ar[d]^-{0} \\
\ker D_C \ar[u]^-{\left[D_Y|_{\psi^{-1}(C)}^{\psi^{-1}(U)}\right]} & \ker D_Y \ar[l] & \ker D_U \ar[l]
}\end{split} \end{equation}
induced, via the Snake Lemma, by the commuting diagram
\[ \xymatrix{
\Z^{\psi^{-1}(U)}\ar[r]\ar[d]^-{D_U} & \Z^{\psi^{-1}(Y)}\ar[r]\ar[d]^-{D_Y} & \Z^{\psi^{-1}(C)}\ar[d]^-{D_C} \\
\Z^{\psi^{-1}(U)}\ar[r] & \Z^{\psi^{-1}(Y)}\ar[r] & \Z^{\psi^{-1}(C)} .
} \]
A more general formula is given in~\cite{cet} for the case where $E$ is not row-finite.  This will be needed in~\S\ref{sec:unital}.

When calculating the reduced filtered $\K$\nb-theory of a graph \csa{}, we will denote the maps in the sequence~\eqref{eq:sixtermsnake} by $\iota$, $\pi$, and $\Delta$, indexed as the natural transformations $i$, $r$, and $\delta$ in Definition~\ref{def:generators}.  For a path $p$ in $X$, a composite $\iota_p$ of natural transformations  is defined as in Definition~\ref{def:Catgunnar}.

\subsection{Range of reduced filtered \texorpdfstring{$\K$\nb-theory}{K-theory} for graph \csa{}s}

The following theorem by S\o{}ren Eilers, Mark Tomforde, James West and the third named author, determines the range of filtered $\K$\nb-theory over the two-point space $\{1,2\}$ with $2\to 1$.  To apply it in the proof of Theorem~\ref{thm:range}, we quote it here reformulated for matrices acting from the right (thereby changing column-finiteness to row-finiteness, etc.).

\begin{theorem}[{\cite{ektw}*{Propositions 4.3 and 4.7}}] \label{ektw}
Let
\[
\xymatrix{
G_1\ar[r]^-{\epsilon} & G_2 \ar[r]^-{\gamma} & G_3 \ar[d]^-{0} \\
F_3\ar[u]^-{\delta} & F_2\ar[l]^-{\gamma'} & F_1\ar[l]^-{\epsilon'}
}
\]
be an exact sequence~$\mathcal E$ of abelian groups with $F_1$, $F_2$, $F_3$ free.
Suppose that there exist row-finite matrices $A\in M_{n_1,n_1'}(\Z)$ and  $B\in M_{n_3,n_3'}(\Z)$ for some $n_1,n_1',n_3,n_3'\in\{1,2,\ldots,\infty\}$ with isomorphisms
\[ \alpha_1\colon\coker A\to G_1, \quad \beta_1\colon\ker A\to F_1, \]
\[ \alpha_3\colon\coker B\to G_3, \quad \beta_3\colon\ker B\to F_3. \]
Then there exist a row-finite matrix $Y\in M_{n_3,n_1'}(\Z)$ and isomorphisms
\[ \alpha_2\colon\coker \begin{pmatrix} A & 0 \\ Y & B \end{pmatrix} \to G_2, \quad \beta_2\colon\ker \begin{pmatrix} A & 0 \\ Y & B \end{pmatrix} \to F_2 \]
such that the tuple $(\alpha_1,\alpha_2,\alpha_3,\beta_1,\beta_2,\beta_3)$ gives an isomorphism of complexes from the exact sequence
\[
\xymatrix{ 
\coker A\ar[r]^-{I} & \coker {\begin{pmatrix} A & 0 \\ Y & B \end{pmatrix}} \ar[r]^-{P} & \cok B \ar[d]^-{0} \\
\coker B\ar[u]^-{[Y]} & \coker {\begin{pmatrix} A & 0 \\ Y& B \end{pmatrix}} \ar[l]^-{P'} & \coker A, \ar[l]^-{I'}
}
\]
where the maps $I, I'$ and $P,P'$ are induced by the obvious inclusions and projections,
to the exact sequence~$\mathcal E$.

If there exist an $A'\in M_{n_1',n_1}(\Z)$ such that $A'A-1\in M_{n_1',n_1'}(\Z_+)$, then $Y$ can be chosen such that $Y \in M_{n_3,n_1'}(\Z_+)$.
If furthermore a row-finite matrix $Z\in M_{n_3,n_1'}(\Z)$ is given, then $Y$ can be chosen such that $Y-Z \in M_{n_3,n_1'}(\Z_+)$.
\end{theorem}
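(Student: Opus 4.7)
The plan is to apply the Snake Lemma to a short exact sequence of two-term complexes and then to choose $Y$ so that the resulting six-term exact sequence matches $\mathcal E$ under the prescribed isomorphisms.

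For any row-finite $Y\in M_{n_3,n_1'}(\Z)$, denote the block matrix by $N = \begin{pmatrix} A & 0 \\ Y & B \end{pmatrix}$. It fits into the evident short exact sequence of two-term complexes
\[
0 \to \bigl(\Z^{n_1}\xrightarrow{A}\Z^{n_1'}\bigr) \to \bigl(\Z^{n_1+n_3}\xrightarrow{N}\Z^{n_1'+n_3'}\bigr) \to \bigl(\Z^{n_3}\xrightarrow{B}\Z^{n_3'}\bigr) \to 0
\]
whose outer maps are the canonical inclusions and projections. The Snake Lemma produces the six-term exact sequence of the theorem, and a direct computation identifies the connecting homomorphism $\ker B\to\cok A$ as right multiplication by $Y$, namely $w\mapsto [wY]$.

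To make this connecting map realize $\delta$ under the prescribed isomorphisms, consider the composite $\phi = \beta_3\delta\alpha_1^{-1}\colon\ker B\to\cok A$. Adopting the convention $\Z^n=\bigoplus_n\Z$ (forced by the row-finiteness of the matrices), $\ker B$ is a subgroup of a free abelian group, hence free by Pontryagin's theorem; this allows lifting $\phi$ along the canonical projection $\Z^{n_1'}\twoheadrightarrow\cok A$ to some $\widetilde\phi\colon\ker B\to\Z^{n_1'}$. Since $\Z^{n_3}/\ker B\cong\im B$ is also free (as a subgroup of $\Z^{n_3'}$), the inclusion $\ker B\hookrightarrow\Z^{n_3}$ splits; extending $\widetilde\phi$ by zero on a chosen complement yields a homomorphism $\Z^{n_3}\to\Z^{n_1'}$, automatically presented by a row-finite matrix $Y$. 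With this $Y$, the Snake Lemma sequence matches $\mathcal E$ at the four positions where isomorphisms are prescribed and at the connecting map; the Five Lemma then supplies compatible isomorphisms $\beta_2$ and $\alpha_2$ completing the tuple.

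For the positivity refinement, observe that $Y\mapsto Y+cA$ for any $c\in M_{n_3,n_1}(\Z)$ leaves the connecting map $[Y]$ unchanged (since right multiplication by $cA$ factors through $A$) and preserves row-finiteness, so the conclusion of the first part continues to hold after such an adjustment. Assuming $A'A-1\in M_{n_1',n_1'}(\Z_+)$, write $W := Y-Z = W_+ - W_-$ with $W_+,W_-$ non-negative and disjointly supported, and take $c := W_-A'$, which is row-finite under the implicit assumption that $A'$ is row-finite (needed for $A'A$ to be defined). Then
\[
(Y+cA)-Z = W + W_-A'A = W_+ + W_-(A'A-1) \in M_{n_3,n_1'}(\Z_+),
\]
achieving the required positivity; taking $Z=0$ gives the basic positive statement. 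The main obstacle is in the second paragraph: producing $Y$ with the prescribed connecting map in the possibly infinite case, which rests on Pontryagin's theorem to ensure the freeness needed to lift $\phi$ and split the inclusion; the Five Lemma wrap-up and the positivity adjustment are then routine.
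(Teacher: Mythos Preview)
The paper does not itself prove this theorem: it is quoted from \cite{ektw}*{Propositions~4.3 and~4.7} and used as input for Theorem~\ref{thm:range}, so there is no in-paper argument to compare against. Your proposal nevertheless has a genuine gap. You construct $Y$ so that the Snake-Lemma connecting map $\ker B\to\cok A$ equals $\beta_3\delta\alpha_1^{-1}$, and then appeal to the Five Lemma for $\alpha_2$ and $\beta_2$. But the Five Lemma does not manufacture maps; it only certifies that a \emph{given} map in a commuting ladder is an isomorphism. For $\beta_2$ this is harmless: the sequences $0\to\ker A\to\ker N\to\ker([Y])\to 0$ and $0\to F_1\to F_2\to\ker\delta\to 0$ both split because their right-hand terms are subgroups of free abelian groups, so a compatible isomorphism $\beta_2$ can be written down directly. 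For $\alpha_2$ the argument fails: matching the connecting map does not determine the extension class of $0\to\cok([Y])\to\cok N\to\cok B\to 0$, and nothing you do forces it to agree with that of $0\to\cok\delta\to G_2\to G_3\to 0$. Concretely, take $n_1=n_1'=n_3=n_3'=1$, $A=B=(2)$, and let $\mathcal E$ be the nontrivial extension with $G_2=\Z/4$. Since $\ker B=0$, every $Y$ produces the correct (zero) connecting map; your recipe outputs $Y=(0)$, whence $\cok N=\Z/2\oplus\Z/2\not\cong G_2$, and no $\alpha_2$ exists.

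What is missing is a further adjustment of $Y$---exploiting the freedom in $Y$ on a complement of $\ker B$, or within $\im A$ on $\ker B$---to realise the prescribed class in $\Ext^1(G_3,\cok\delta)$. Once a correct $Y$ is in hand, your positivity refinement via $Y\mapsto Y+cA$ is sound (modulo the row-finiteness of $A'$ that you rightly flag), since $\begin{pmatrix}A&0\\Y+cA&B\end{pmatrix}=\begin{pmatrix}1&0\\c&1\end{pmatrix}\begin{pmatrix}A&0\\Y&B\end{pmatrix}$ has the same image as $N$ and hence literally the same cokernel, with the kernels related compatibly by the invertible left factor.
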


\begin{proposition} \label{prop:range}
Let $A$ be a purely infinite graph \csa{} over $X$.  Then $\FKgunnar(A)$ is an exact $\Catgunnar$\nb-mod\-ule, and $\FK_{\si x}^1(A)$ is free for all $x\in X$. If $A$ is a purely infinite \CK{} over $X$, then, for all $x\in X$, the groups $\FK_{\si x}^1(A)$ and $\FK_{\osi x}^0(A)$ are furthermore finitely generated, and the rank of $\FK_{\si x}^1(A)$ coincides with the rank of the cokernel of the map $i_{\obd x}^{\osi x}\colon \FK_{\obd x}^0(A)\to \FK_{\osi x}^0(A)$.
\end{proposition}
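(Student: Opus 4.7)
The plan is to treat the four assertions in turn, with the first three being essentially immediate from the setup.

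For exactness of $\FKgunnar(A)$, I would simply invoke Corollary~\ref{cor:FKgunnarexact}, using that every purely infinite graph \csa{} has real rank zero (by~\cite{hongszymanski}, as recalled in the introduction and in~\S\ref{sec:notation}). For freeness of $\FK^1_{\si x}(A)$, recall that every gauge-invariant subquotient of a graph \csa{} is again a graph \csa{}, so $A(\si x)$ is a simple graph \csa{}; the formula recalled in~\S\ref{ssec:calc} then presents its $\K_1$ as $\ker D$ for an integer matrix $D$, and a subgroup of a free abelian group is free. For finite generation in the \CK{} case, use that the defining graph is finite with no sources: all the matrices $D_F$ appearing in~\S\ref{ssec:calc} are then finite, so $\ker D_F$ and $\coker D_F$ are finitely generated, whence $\FK^1_{\si x}(A)$ and $\FK^0_{\osi x}(A)$ are finitely generated.

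For the rank equality, I would apply the six-term exact sequence to the short exact sequence $A(\obd x) \into A(\osi x) \onto A(\si x)$. Because $A$ has real rank zero, the exponential map $\FK^0_{\si x}(A) \to \FK^1_{\obd x}(A)$ vanishes (\cite{brownpedersen}, as already used in Lemma~\ref{lem:exactseq}), so this six-term sequence breaks up into the six-term exact sequence
\[
0 \to \FK^1_{\obd x}(A) \to \FK^1_{\osi x}(A) \to \FK^1_{\si x}(A) \xrightarrow{\delta} \FK^0_{\obd x}(A) \xrightarrow{i_{\obd x}^{\osi x}} \FK^0_{\osi x}(A) \to \FK^0_{\si x}(A) \to 0.
\]
From the last three terms I read off $\coker i_{\obd x}^{\osi x} \cong \FK^0_{\si x}(A)$, so it suffices to show $\rank \FK^1_{\si x}(A) = \rank \FK^0_{\si x}(A)$ for the simple subquotient corresponding to $\si x$.

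For that final step, I would use the description in~\S\ref{ssec:calc}: since $A$ is a \CK{} with graph $E$ and classifying map $\psi\colon E^0\to X$, the simple subquotient $A(\si x)$ has $\K_0 \cong \coker D_{\si x}$ and $\K_1 \cong \ker D_{\si x}$ where $D_{\si x}$ is the square integer matrix indexed by $\psi^{-1}(x)$. For any square matrix $D$ over $\Z$ on a finite index set, $\rank(\ker D) = \rank(\coker D)$ by elementary linear algebra (both equal the dimension defect of $D$ over $\Q$), and this is the desired equality. The main obstacle here is not really an obstacle but a bookkeeping point, namely making sure that the exponential map vanishes on the correct piece (which is where real rank zero enters); once that is granted, the proof reduces to the Snake-Lemma computation of~\S\ref{ssec:calc} combined with the elementary rank identity.
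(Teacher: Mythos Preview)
Your argument is correct and matches the paper's proof: both invoke Corollary~\ref{cor:FKgunnarexact} for exactness, the fact that subquotients of real-rank-zero graph \csas{} are Morita equivalent to graph \csas{} for freeness of $\K_1$, the vanishing of the exponential map to identify $\coker i_{\obd x}^{\osi x}$ with $\FK_{\si x}^0(A)$, and the equality $\rank\K_0=\rank\K_1$ for \CKs{} (which the paper simply cites as a known fact and you derive from the square-matrix description). Two small caveats that do not affect the substance: $A(\si x)$ need not be simple since $A$ is not assumed tight over~$X$, and the formulas in~\S\ref{ssec:calc} are stated under the standing hypothesis that every vertex supports two loops, so to be precise you should either appeal to the general $\K$\nb-theory computation for graph \csas{} or, as the paper does, to the fact that every subquotient of a purely infinite \CK{} is stably isomorphic to a \CK{}.
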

\begin{proof}

Exactness of $\FKgunnar(A)$ is stated in Corollary~\ref{cor:FKgunnarexact}.
The group $\FK_{\si x}^1(A)$ is free for all $x\in X$ since the $\K_1$-group of a graph \csa{} is free, and a subquotient of a real-rank-zero graph \csa{} is Morita equivalent to a graph \csa{} \cite{raeburn}*{Theorem~4.9}.

Assume that $A$ is a \CK{}.
For any \CK{} $B$, $\K_*(B)$ is finitely generated and $\rank\K_0(B)=\rank\K_1(B)$. Since a subquotient of a purely infinite \CK{} is stably isomorphic to a \CK{}, the groups $\FK_{\si x}^1(A)$ and $\FK_{\osi x}^0(A)$ are finitely generated and $\rank\FK_{\si x}^1(A)=\rank\FK_{\si x}^0(A)$ for all $x\in X$.
Since $A$ has real rank zero, the sequence
\[ \FK_{\obd x}^0(A) \xrightarrow{i_{\obd x}^{\osi x}} \FK_{\osi x}^0(A) \xrightarrow{r_{\osi x}^{\si x}} \FK_{\si x}^0(A) \rightarrow 0\]
is exact by~\cite{brownpedersen}*{Theorem~3.14}. Hence
\[
\rank\FK_{\si x}^1(A)=\rank\Bigl(\coker\bigl(i_{\obd x}^{\osi x}\colon\FK_{\obd x}^0(A)\to\FK_{\osi x}^0(A)\bigr)\Bigr).\qedhere
\]
\end{proof}

Combining Proposition~\ref{prop:range} with Theorem~\ref{thm:range}, one obtains a complete description of the range of reduced filtered $\K$\nb-theory  on purely infinite tight graph \csas{} over~$X$, and on purely infinite tight \CK{}s over $X$.

\begin{theorem} \label{thm:range}
Let $M$ be an exact $\Catgunnar$\nb-mod\-ule  with $M(\gsi x)$ free for all $x\in X$.
Then there exists a countable graph $E$ such that all vertices in $E$ are regular and support at least two loops, the \csa{} $C^*(E)$ is tight over $X$ and $\FKgunnar\bigl(C^*(E)\bigr)$ is isomorphic to~$M$.
By construction $C^*(E)$ is purely infinite.

The graph $E$ can be chosen to be finite if and only if $M(\gsi x)$ and $M(\gosi x)$ are finitely generated and the rank of $M(\gsi x)$ coincides with the rank of the cokernel of $i\colon M(\gobd x)\to M(\gosi x)$ for all $x\in X$.  
If $E$ is chosen finite, then by construction $C^*(E)$ is a Cuntz--Krie\-ger algebra.

\begin{proof}
For each $x\in X$, we may choose, by~\cite{ektw}*{Proposition 3.3}, a countable, non-empty set~$V_x$, a matrix $D_x\in M_{V_x}(\Z_+)$ and isomorphisms $\phi_{\gsi x}\colon M(\gsi x)\to \ker D_x$ and $\phi_{\rsi x}\colon M(\rsi x)\to \cok D_x$, where $M(\rsi x) \defeq \cok(M(\gobd x)\xrightarrow{i}M\bigl(\gosi x)\bigr)$. 
Define $r_{\gosi x}^{\rsi x}\colon M(\gosi x)\to M(\rsi x)$ as the cokernel map.
Given a matrix $D$, we let $E(D)$ denote the graph with adjacency matrix $D^t$.
We may furthermore assume that all vertices in the graph $E({1+D_x})$ are regular and support at least two loops. If $M(\gsi x)$ and $M(\gosi x)$ are finitely generated and the rank of $M(\gsi x)$ coincides with the rank of the cokernel of $i\colon M(\gobd x)\to M(\gosi x)$, then the set $V_x$ can be chosen to be finite.

For each $y,z\in X$ with $y\neq z$ we desire to construct a matrix $H_{yz}\colon\Z^{V_z}\to\Z^{V_y}$ with non-negative entries satisfying that $H_{yz}$ is non-zero if and only if $y>z$, and satisfying that for each $x\in X$ there exist isomophisms $\phi_{\gobd x}$ and $\phi_{\gosi x}$ making the diagrams
\begin{equation}
\begin{split}
\xymatrix{
M(\gobd x)\ar[rd]^-{\phi_{{\gobd x}}}\ar[rr]^-{i_{\gobd x}^{\gosi x}} & & M(\gosi x)\ar[d]^-{\phi_{\gosi x}}\ar[rr]^-{r_{\gosi x}^{\rsi x}} & & M(\rsi x)\ar[dl]^-{\phi_{\rsi x}} \\
& \cok D_{\ob \{x\}} \ar[r]^-{\iota_{\obd x}^{\osi x}} & \cok D_{\osi x} \ar[r]^-{\pi_{\osi x}^{\si x}} & \cok D_{x} \ar[d]^-{0} & \\
& \ker D_{x} \ar[u]^-{D_{\osi x}|_{\phi^{-1}(x)}^{\phi^{-1}(\ob \{x\})}} & \ker D_{\osi x} \ar[l]^-{\pi_{\osi x}^{\si x}} & \ker D_{\ob \{x\}} \ar[l]^-{\iota_{\obd x}^{\osi x}} & \\
M(\gsi x)\ar[ru]^-{\phi_{\gsi x}}\ar[uuu]^-{\delta_{\gsi x}^{\gobd x}} &&&&
} \label{FKx}
\end{split}
\end{equation}
and
\begin{equation}
\begin{split}
\xymatrix@C+20pt{
M(\gosi y)\ar[dr]^-{\phi_{\gosi y}}\ar[rr]^-{i_{\gosi y}^{\gobd x}} && M(\gobd x)\ar[d]^-{\phi_{\gobd x}} & \\
& \cok D_{\osi y} \ar[r]^-{\iota_{\osi y}^{\obd x}} & \cok D_{\obd x} \ar[r]^-{\pi_{\obd x}^{\obd x \setminus \osi y}} & \cok D_{\ob \{x\}\setminus \osi y} \ar[d]^-{0} \\
& \ker D_{\ob \{x\}\setminus \osi y} \ar[u]^-{D_{\ob \{x\}}|_{\phi^{-1}(\ob \{x\}\setminus\osi y)}^{\phi^{-1}(\osi y)}} & \ker D_{\obd x} \ar[l]^-{\pi_{\obd x}^{\obd x \setminus \osi y}} & \ker D_{\osi y} \ar[l]^-{\iota_{\osi y}^{\obd x}}
} \label{FKyx}
\end{split}
\end{equation}
commute when $y\to x$, where $D_F\in M_{V_F}(\Z_+)$ for each $F\subseteq X$ is defined as
\[ D_F(v,w) = \begin{cases} D_x(v,w) & v,w\in V_x \\ H_{yz}(v,w) & v\in V_y, w\in V_x, x\neq y \end{cases} \]
with $V_F=\bigcup_{y\in F}V_y$.
The constructed graph $E({D_X+1})$ then has the desired properties.

We proceed by a recursive argument, by adding to an open subset an open point in the complement. Given $U\in\Op(X)$, assume that for all $z,y\in U$, the matrices $H_{yz}$ and isomorphisms $\phi_{\gobd y}$ and $\phi_{\gosi y}$ have been defined and satisfy that the diagrams~\eqref{FKx} and~\eqref{FKyx} commute for all $x,y\in U$ with $y\to x$.
Let $x$ be an open point in $X\setminus U$ and let us construct isomorphisms $\phi_{\gobd x}$ and $\phi_{\gosi x}$, and for all $y\in\ob \{x\}$ non-zero matrices $H_{yx}$, making the diagrams~\eqref{FKx} and~\eqref{FKyx} commute.

Consider the commuting diagram
\[ \xymatrix@C+10pt{
\displaystyle\bigoplus_{(p,q)\in\DoublePaths(x)} M(\gosi{s(p,q)}) \ar[r]^-{\scriptsize\begin{pmatrix} i_p & -i_q \end{pmatrix}}\ar[d]^-{(\phi_{\gosi{s(p,q)}})} & \displaystyle\bigoplus_{y\to x} M(\gosi y) \ar[r]^-{(i_{\gosi y}^{\gobd x})} \ar[d]^-{(\phi_{\gosi y})} & M(\gobd x) \ar[r]\ar@{..>}[d]^-{(\phi_{\gobd x})} & 0 \\
\displaystyle\bigoplus_{(p,q)\in\DoublePaths(x)} \cok D_{\osi{s(p,q)}} \ar[r]^-{\scriptsize\begin{pmatrix} \iota_p & -\iota_q \end{pmatrix}} & \displaystyle\bigoplus_{y\to x} \cok D_{\osi y} \ar[r]^-{(\iota_{\osi y}^{\obd x})} & \cok D_{\ob \{x\}} \ar[r] & 0 .
} \]
The top row is exact by exactness of $M$, and the bottom row is exact by exactness of $\FK\bigl(C^*(E({1+D_{\ob \{x\}}}))\bigr)$.  An isomorphism $\phi_{\gobd x}\colon M(\gobd x)\to \cok {D_{\ob \{x\}}}$ is therefore induced.  By construction,~\eqref{FKyx} commutes for all $y\to x$.

Now consider the commuting diagram
\[ \xymatrix{
M(\gobd x) \ar[rr]^-{i_{\gobd x}^{\gosi x}} \ar[dr]^-{\phi_{\gobd x}} && M(\gosi x) \ar[rr]^-{r_{\gosi x}^{\rsi x}} && M(\rsi x)\ar[dl]^-{\phi_{\rsi x}}  \\
&\cok D_{\ob \{x\}}\ar[r] & M(\gosi x) \ar@{=}[u]\ar[r] & \cok D_x \ar[d]^-{0} & \\
& \ker D_x\ar[u]  & F \ar[l] & \ker D_{\ob \{x\}} \ar[l] & \\
M(\gsi x) \ar[uuu]^-{\delta_{\gsi x}^{\gobd x}} \ar[ur]^-{\phi_{\gsi x}} && &&
} \]
with the maps in the inner sequence being the unique maps making the squares commute, and 
where a free group~$F$ and maps into and out of it have been chosen so that the inner six-term sequence is exact.
Apply Theorem~\ref{ektw} to the inner six-term exact sequence to get non-zero matrices $H_{yx}$ for all $y\in\ob \{x\}$ realizing the sequence, that is, making~\eqref{FKx} commute.

Finally, we note that the constructed graph algebra $C^*(E(D_X+1))$ is purely infinite by~\cite{hongszymanski}*{Theorem~2.3} since all vertices in $E(D_X+1)$ are regular and support two loops.
Since the graph $E(D_X+1)$ has no sinks or sources, the graph algebra $C^*(E(D_X+1))$ is a \CK{} when $E(D_X+1)$ is finite.
\end{proof}
\end{theorem}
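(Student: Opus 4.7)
The plan is to build the graph $E$ by recursively constructing its adjacency matrix (minus the identity) as a block matrix over $V_X=\bigsqcup_{x\in X}V_x$, adding one point of $X$ at a time along the specialization order. For each $x\in X$, I first invoke \cite{ektw}*{Proposition~3.3} to produce a countable vertex set $V_x$, a matrix $D_x\in M_{V_x}(\Z_+)$, and isomorphisms $\phi_{\gsi x}\colon M(\gsi x)\xrightarrow{\cong}\ker D_x$ and $\phi_{\rsi x}\colon \cok\bigl(i_{\gobd x}^{\gosi x}\bigr)\xrightarrow{\cong}\cok D_x$, with the freedom to arrange that every vertex of the associated graph is regular and supports at least two loops; this will eventually force pure infiniteness via \cite{hongszymanski}*{Theorem~2.3}. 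Under the finite-generation and rank hypotheses in the second half of the theorem, each $V_x$ can be chosen finite, and the resulting finite graph has no sources, so $C^*(E)$ will be a \CK.

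Assume inductively that for some $U\in\Op(X)$ I have assembled a block matrix $D_U$ realizing $\FKgunnar$ over $U$, together with all the isomorphisms $\phi_{\gobd y}$, $\phi_{\gosi y}$ for $y\in U$ making the relevant six-term diagrams commute. Pick a maximal $x\in X\setminus U$, so that $\ob\{x\}\subset U$, and construct the new blocks $H_{yx}\colon\Z^{V_x}\to\Z^{V_y}$ for $y\in\ob\{x\}$ together with $\phi_{\gobd x}$ and $\phi_{\gosi x}$. The isomorphism $\phi_{\gobd x}\colon M(\gobd x)\xrightarrow{\cong}\cok D_{\ob\{x\}}$ is essentially forced: exactness of the $\Catgunnar$-module $M$ presents $M(\gobd x)$ as the cokernel \eqref{eq:Catgunnarrel2} of the path-difference map, while Lemma~\ref{lem:exactseq} applied to the covering $(\osi y)_{y\to x}$ of $\ob\{x\}$ (iterated once to describe the intersections) presents $\cok D_{\ob\{x\}}$ as the cokernel of the same map after identification via the previously constructed $\phi_{\gosi y}$.

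The six-term exact sequence attached to the boundary pair $(\osi x,\si x)$ now has its outer three terms realized as $\ker/\cok$ of $D_{\ob\{x\}}$ and $D_x$ via $\phi_{\gobd x}$, $\phi_{\gsi x}$, $\phi_{\rsi x}$, which is exactly the hypothesis of Theorem~\ref{ektw}. The theorem produces a non-negative row-finite matrix $Y$ and an isomorphism $\phi_{\gosi x}$ realizing the middle term as $\ker/\cok$ of $\begin{pmatrix}D_{\ob\{x\}}&0\\ Y&D_x\end{pmatrix}$; the non-negativity clause applies since $1+D_{\ob\{x\}}$ has the required left inverse modulo a non-negative perturbation, because each vertex already supports at least two loops. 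The rows of $Y$ partition into the desired blocks $H_{yx}$, $y\in\ob\{x\}$. To enforce tightness of $C^*(E)$ over $X$, I invoke the final clause of Theorem~\ref{ektw} with a prescribed non-negative lower bound that forces $H_{yx}\neq 0$ for every $y\in\ob\{x\}$ with $y>x$. Iterating exhausts $X$ and produces the adjacency matrix $D_X$ of the desired graph $E=E(1+D_X)$.

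I expect the main obstacle to be the construction of $\phi_{\gobd x}$: one must verify that the abstract path-difference presentation of $M(\gobd x)$ dictated by the exactness axioms of $\Catgunnar$ coincides, via the previously constructed isomorphisms $\phi_{\gosi y}$, with the concrete Mayer--Vietoris-type presentation of $\cok D_{\ob\{x\}}$ coming from Lemma~\ref{lem:exactseq}. This is precisely the step that genuinely uses real rank zero, through \cite{brownpedersen}*{Theorem~3.14} as invoked by Lemma~\ref{lem:exactseq}, and the defining relations~\eqref{eq:gunnarrelation2} of $\Catgunnar$; once it is in place, the remainder of the proof is diagram chasing wrapped around iterated applications of Theorem~\ref{ektw}.
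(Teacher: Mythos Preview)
Your proposal is correct and follows essentially the same route as the paper: choose the diagonal blocks $D_x$ via \cite{ektw}*{Proposition~3.3}, induct by adding an open point $x$ of $X\setminus U$, obtain $\phi_{\gobd x}$ by matching the cokernel presentation~\eqref{eq:Catgunnarrel2} of $M(\gobd x)$ with the Mayer--Vietoris presentation of $\cok D_{\ob\{x\}}$, and then invoke Theorem~\ref{ektw} (including its final clause to force $H_{yx}\neq 0$) to produce the off-diagonal blocks. The only point you leave implicit that the paper spells out is that, before applying Theorem~\ref{ektw}, one must complete the partial six-term data to an honest exact sequence by inserting a free group $F$ in the $F_2$ slot with suitable maps; this is routine, so your outline stands.
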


Combining the previous theorem with Restorff's classification of purely infinite \CKs{}~\cite{restorff}, we obtain the following description of stable isomorphism classes of purely infinite \CKs{}.

\begin{corollary}
The functor~$\FKgunnar$ induces a bijection between the set of stable isomorphism classes of tight purely infinite \CKs{} over~$X$ and the set of isomorphism classes of exact $\Catgunnar$\nb-mod\-ules $M$ such that, for all $x\in X$, $M(\gsi x)$ is free, $M(\gsi x)$ and $M(\gosi x)$ are finitely generated, and the rank of $M(\gsi x)$ coincides with the rank of the cokernel of the map $i_{\gobd x}^{\gosi x}\colon M(\gobd x)\to M(\gosi x)$.
\end{corollary}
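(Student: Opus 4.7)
The plan is to assemble three ingredients already in place: Proposition~\ref{prop:range} (which controls the image of the invariant), Theorem~\ref{thm:range} (which yields surjectivity), and Restorff's classification theorem~\cite{restorff} (which yields injectivity on stable isomorphism classes). First I would verify that $\FKgunnar$ descends to a well-defined map from stable isomorphism classes of tight purely infinite \CKs{} over $X$ to the set of isomorphism classes of $\Catgunnar$\nb-modules with the stated properties. Functoriality of $\FKgunnar$ together with its construction from $\K$\nb-theory of $X$\nb-distinguished subquotients makes it a stable invariant, so it induces a well-defined map on stable isomorphism classes. Proposition~\ref{prop:range} guarantees that the image lands in the stated target: exactness of the module, freeness of $M(\gsi x)$, finite generation of both $M(\gsi x)$ and $M(\gosi x)$, and the rank-matching between $M(\gsi x)$ and the cokernel of $i_{\gobd x}^{\gosi x}$ are all established there.

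For surjectivity I would appeal directly to Theorem~\ref{thm:range}: given an exact $\Catgunnar$\nb-module $M$ satisfying the listed finiteness and rank conditions, that theorem produces a finite graph $E$ with all vertices regular and supporting at least two loops such that $C^*(E)$ is tight over $X$, purely infinite, a \CK{}, and satisfies $\FKgunnar\bigl(C^*(E)\bigr)\cong M$. For injectivity I would invoke Restorff's classification theorem from~\cite{restorff}: two tight purely infinite \CKs{} over $X$ are stably isomorphic if and only if they have isomorphic reduced filtered $\K$\nb-theory in Restorff's formulation. Since $\FKgunnar$ is equivalent to Restorff's invariant in the real-rank-zero case (as stated at the opening of Section~\ref{gunnar}) and every purely infinite \CK{} has real rank zero by~\cite{hongszymanski}, this transfers to give injectivity of the map induced by $\FKgunnar$.

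Since existence, well-definedness, and both directions of the bijection are thus supplied by results already proved or cited, the argument is one of assembly and there is no genuinely new obstacle. The only point requiring mild care is the compatibility between the formulation of $\FKgunnar$ adopted here and the version of reduced filtered $\K$\nb-theory used in~\cite{restorff}; this is precisely the equivalence noted at the start of Section~\ref{gunnar}, which applies because purely infinite \CKs{} have real rank zero. Everything else is routine bookkeeping.
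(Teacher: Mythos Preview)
Your proposal is correct and follows exactly the approach the paper takes: the corollary is stated without a separate proof, introduced simply as the combination of Theorem~\ref{thm:range} with Restorff's classification theorem~\cite{restorff}, and you have merely spelled out the assembly (well-definedness via stability, image contained in the target via Proposition~\ref{prop:range}, surjectivity via Theorem~\ref{thm:range}, injectivity via~\cite{restorff}) in more detail than the paper does.
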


\section{Unital reduced filtered \texorpdfstring{$\K$}{K}-theory} \label{sec:unital}

Anticipating a generalization of the main result in \cite{restorff} accounting for actual isomorphisms rather than stable isomorphisms, we also provide a `unital' version of our range result.
Depending on the space $X$, the group $\K_0(A)$ may not be part of the invariant $\FKgunnar(A)$. This slightly complicates the definition of unital reduced filtered $\K$-theory.

For $x,x'\in X$, we let $\inf(x,x')$ denote the set $\{ y\in X \mid y\to x, y\to x'\}$.

\begin{definition}
The category $\Mod(\Catgunnar)^\pt$ of \emph{pointed $\Catgunnar$\nb-mod\-ules} is defined to have objects given by pairs $(M,m)$ where $M$ is an $\Catgunnar$\nb-mod\-ule and
\[
m\in\coker\left(\bigoplus_{\stackrel{x,x'\in X}{y\in\inf(x,x')}}M(\gosi y)\xrightarrow{\begin{pmatrix} i_{\gosi y}^{\gobd x}i_{\gobd x}^{\gosi x} & -i_{\gosi y}^{\gobd x'}i_{\gobd x'}^{\gosi x'} \end{pmatrix}} \bigoplus_{x\in X} M(\gosi x)\right),
\]
and a morphism $\phi\colon(M,m)\to(N,n)$ is an $\Catgunnar$\nb-mod\-ule homomorphism from~$M$ to~$N$ whose induced map on the cokernels sends~$m$ to~$n$.
\end{definition}

\begin{lemma} \label{lem:unitexact}
Let $A$ be a unital \csa{} over $X$ of real rank zero, and let $U\in\Op(X)$.  Then the sequence
\[
\bigoplus_{\stackrel{x,x'\in U}{y\in\inf(x,x')}} \FK_{\osi y}^0(A)\xrightarrow{\begin{pmatrix} i_{\osi y}^{\osi x} & -i_{\osi y}^{\osi {x'}} \end{pmatrix}}\bigoplus_{x\in U} \FK_{\osi x}^0(A) \xrightarrow{(i_{\osi x}^U)} \FK_U^0(A) \rightarrow 0
\]
is exact.
\end{lemma}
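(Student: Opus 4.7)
The plan is to derive this directly from Lemma~\ref{lem:exactseq}, essentially mimicking the structure of the proof of Corollary~\ref{cor:FKgunnarexact}. Since each $\osi x$ is the smallest open set containing $x \in U$, we have $\osi x \subseteq U$ for every $x \in U$, and visibly $\bigcup_{x\in U}\osi x = U$. Applying Lemma~\ref{lem:exactseq} to this cover yields the Mayer--Vietoris-type exact sequence
\[
\bigoplus_{x, x' \in U} \FK^0_{\osi x \cap \osi{x'}}(A) \longrightarrow \bigoplus_{x \in U} \FK^0_{\osi x}(A) \longrightarrow \FK^0_U(A) \longrightarrow 0,
\]
which already delivers the surjectivity of the rightmost arrow in the desired sequence, as well as the containment of the image of the prescribed first arrow in the kernel of the second one.

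To finish, I would rewrite each intersection $\osi x \cap \osi{x'}$ as a union of the smaller opens $\osi y$ indexed by $y \in \inf(x, x')$. Concretely, one verifies that $\osi x \cap \osi{x'} = \bigcup_{y \in \inf(x, x')} \osi y$ by a direct poset argument on the finite $T_0$-space $X$, and then applies Lemma~\ref{lem:exactseq} a second time to this refined cover to express $\FK^0_{\osi x \cap \osi{x'}}(A)$ as a quotient of $\bigoplus_{y \in \inf(x, x')} \FK^0_{\osi y}(A)$. Composing with the first Mayer--Vietoris map and using the factorisation $i_{\osi y}^{\osi x} = i_{\osi y}^{\osi x \cap \osi{x'}}\, i_{\osi x \cap \osi{x'}}^{\osi x}$ (valid since $\osi y \subseteq \osi x \cap \osi{x'} \subseteq \osi x$) then identifies the image of the prescribed first arrow with the image of the Mayer--Vietoris map, and hence, by the first step, with the kernel of the second arrow.

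The main obstacle is the combinatorial identification of $\osi x \cap \osi{x'}$ with the union of the $\osi y$ for $y \in \inf(x, x')$ and the corresponding bookkeeping of Mayer--Vietoris relations; one may argue by induction on the depth of the poset $X$, using the exactness of $\FKgunnar(A)$ (Corollary~\ref{cor:FKgunnarexact}) and in particular the role of $\DoublePaths(x)$ to absorb all relations generated by the repeated applications of Lemma~\ref{lem:exactseq} to the refined covers.
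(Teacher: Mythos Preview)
Your approach is exactly the paper's: apply Lemma~\ref{lem:exactseq} to the cover $(\osi x)_{x\in U}$ of~$U$, then again to a cover of each $\osi x\cap\osi{x'}$ by the sets $\osi y$ with $y\in\inf(x,x')$, and compose. The paper's one-sentence proof is precisely the condensed form of your first two paragraphs.

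One point deserves care, and it applies equally to the paper's proof: the equality $\osi x\cap\osi{x'}=\bigcup_{y\in\inf(x,x')}\osi y$ is \emph{not} true with the definition $\inf(x,x')=\{y\mid y\to x,\ y\to x'\}$ given just before the lemma. For instance, if $x<x'$ then $\osi x\cap\osi{x'}=\osi{x'}$ while $\inf(x,x')=\emptyset$ (no~$y$ can cover both $x$ and $x'$ when one lies strictly below the other), and on such a two-point space the sequence in the lemma fails to be exact at the middle term as soon as $\FK^0_{\osi{x'}}(A)\neq 0$. Everything works if $\inf(x,x')$ is read instead as the set of minimal elements of $\osi x\cap\osi{x'}$ (the minimal common upper bounds of $x$ and~$x'$); with that reading the covering is tautological and the twofold application of Lemma~\ref{lem:exactseq} goes through exactly as you describe. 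Your third paragraph's proposed detour via induction and $\DoublePaths$ is unnecessary under that reading and, as written, too vague to supply a repair on its own---the obstacle you sense is definitional rather than combinatorial.
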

\begin{proof}
This follows from a twofold application of Lemma~\ref{lem:exactseq} using that $U$ is covered by $(\osi x)_{x\in U}$ and that $\osi x\cap \osi{x'}$ is covered by $(\osi y)_{y\in\inf(x,x')}$.
\end{proof}

\begin{definition}
Let $A$ be a unital \csa{} over~$X$ with real rank zero. The \emph{unital reduced filtered $\K$\nb-theory} $\FKgunnar^\unit(A)$ is defined as the pointed $\Catgunnar$\nb-mod\-ule $(\FKgunnar(A),u(A))$ where $u(A)$ is the unique element in
\[ \coker\left( \bigoplus_{\stackrel{x,x'\in X}{y\in\inf(x,x')}} \FK_{\osi y}^0(A)\xrightarrow{\begin{pmatrix} i_{\osi y}^{\osi x} & -i_{\osi y}^{\osi {x'}} \end{pmatrix}}\bigoplus_{x\in X} \FK_{\osi x}^0(A)\right)\]
that is mapped to $[1_A]$ in $\K_0(A)$ by the map induced by the family $\bigl(\FK_{\osi x}^0(A)\xrightarrow{(i_{\osi x}^X)}\FK_X^0(A)\bigr)_{x\in X}$, see~Lemma~\ref{lem:unitexact}.
\end{definition}

\begin{lemma} \label{lem:extend}
Let $A$ and $B$ be \csa s over $X$ of real rank zero, and let $U\in\Op(X)$.  Let a family of isomorphisms $\phi_{\osi x}^0\colon\FK_{\osi x}^0(A)\to\FK_{\osi x}^0(B)$, ${x\in U}$ be given and assume that $\phi_{\osi y}^0i_{\osi y}^{\osi x}=i_{\osi y}^{\osi x}\phi_{\osi x}^0$ holds for all $x,y\in U$ with $y\to x$.
Then $(\phi_{\osi x}^0)_{x\in U}$ can be uniquely extended to a family  of isomorphisms $\phi^0_Y\colon\FK_Y^0(A)\to\FK_Y^0(B)$, ${Y\in\LC(U)}$, that commute with the natural transformations $i$ and $r$.
\end{lemma}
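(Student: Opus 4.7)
The plan is to extend in two stages: first from the family $(\osi x)_{x\in U}$ to all of $\Op(U)$, and then from $\Op(U)$ to $\LC(U)$. For the first stage, I would fix $V\in\Op(U)$ and invoke Lemma~\ref{lem:unitexact} with $V$ in place of $U$ to present $\FK_V^0(A)$ as the cokernel of
\[
\bigoplus_{\substack{x,x'\in V \\ y\in\inf(x,x')}} \FK_{\osi y}^0(A) \xrightarrow{\begin{pmatrix}i_{\osi y}^{\osi x} & -i_{\osi y}^{\osi{x'}}\end{pmatrix}} \bigoplus_{x\in V}\FK_{\osi x}^0(A),
\]
and analogously for $B$. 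Since $y\in\inf(x,x')$ implies both $y\to x$ and $y\to x'$, the given commutation hypothesis shows that $\bigoplus_{x\in V}\phi^0_{\osi x}$ descends to a well-defined isomorphism $\phi^0_V\colon\FK_V^0(A)\to\FK_V^0(B)$ characterized by $\phi^0_{\osi x}i_{\osi x}^V = i_{\osi x}^V\phi^0_V$ for every $x\in V$; bijectivity follows by applying the same construction to the inverse family. For an inclusion $V\subseteq V'$ in $\Op(U)$, composing with $i_{\osi x}^V$ for $x\in V$ and using Lemma~\ref{lem:relations_for_gunnar}(2) gives
\[
i_{\osi x}^V\phi^0_V i_V^{V'} = \phi^0_{\osi x}i_{\osi x}^V i_V^{V'} = \phi^0_{\osi x}i_{\osi x}^{V'} = i_{\osi x}^{V'}\phi^0_{V'} = i_{\osi x}^V i_V^{V'}\phi^0_{V'},
\]
so the joint surjectivity part of Lemma~\ref{lem:unitexact} forces $\phi^0_V i_V^{V'}=i_V^{V'}\phi^0_{V'}$.

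For the second stage, I would write $Y\in\LC(U)$ as $Y=V'\setminus V$ for some $V\subseteq V'$ in $\Op(U)$. Since $A$ and $B$ have real rank zero, Brown--Pedersen's vanishing of the exponential map (as used in the proof of Lemma~\ref{lem:exactseq}) yields a short exact sequence
\[
0\to\FK_V^0(A)\xrightarrow{i_V^{V'}}\FK_{V'}^0(A)\xrightarrow{r_{V'}^Y}\FK_Y^0(A)\to 0
\]
and analogously for $B$. The commuting square from Stage~1 then induces a unique isomorphism $\phi^0_Y\colon\FK_Y^0(A)\to\FK_Y^0(B)$ on the cokernels. Independence of $(V',V)$ is built into the definition of $A(Y)$: different presentations are canonically naturally isomorphic in a way compatible with the open-subset structure. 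Compatibility of the $\phi^0_Y$ with arbitrary $i_{Y_1}^{Y_2}$ and $r_{Y_2}^{Y_1}$ between locally closed subsets reduces, via diagram chases through the short exact sequences above, to the commutations already established for open subsets. Uniqueness is likewise enforced stepwise: joint surjectivity of the $i_{\osi x}^V$ determines $\phi^0_V$, and surjectivity of $r_{V'}^Y$ then determines $\phi^0_Y$.

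The main obstacle I anticipate is the bookkeeping required for verifying independence of the representation $(V',V)$ and for checking compatibility of $\phi^0_Y$ with \emph{all} natural transformations between locally closed subsets; once the canonical identifications underlying the well-definedness of $A(Y)$ are tracked carefully, everything reduces to universal properties of cokernels together with the real-rank-zero hypothesis.
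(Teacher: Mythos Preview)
Your approach is essentially correct and considerably more self-contained than the paper's own proof, which simply reduces to $U=X$ and then invokes the even-degree part of the construction in the companion article~\cite{reduction}*{Theorem~5.17}. Your two-stage extension---first to $\Op(U)$ via the cokernel presentation of Lemma~\ref{lem:unitexact}, then to $\LC(U)$ via real-rank-zero exactness---is the natural explicit way to unpack that citation, and it has the advantage of staying entirely within the present paper.

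One small correction: in Stage~2 you claim a \emph{short} exact sequence
\[
0\to\FK_V^0(A)\xrightarrow{i_V^{V'}}\FK_{V'}^0(A)\xrightarrow{r_{V'}^Y}\FK_Y^0(A)\to 0,
\]
but Brown--Pedersen only gives vanishing of the exponential map $\FK_Y^0\to\FK_V^1$, hence surjectivity of $r_{V'}^Y$. Injectivity of $i_V^{V'}$ on $\K_0$ would require the \emph{index} map $\FK_Y^1\to\FK_V^0$ to vanish, which real rank zero does not guarantee. Fortunately your argument only uses that $\FK_Y^0(A)\cong\coker(i_V^{V'})$, i.e.\ right-exactness, so the fix is cosmetic. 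The residual bookkeeping you flag (independence of the presentation $(V',V)$ and compatibility with all $i$ and $r$ between locally closed subsets) is routine once one chooses compatible open presentations, though spelling it out in full would take some space.
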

\begin{proof}
We may assume that $U=X$. The part of the construction in the proof \cite{reduction}*{Theorem~5.17} that involves only groups in even degree makes sense when $X$ is an arbitrary finite $T_0$\nb-space and implies the present claim as a corollary.
\end{proof}

For a unital graph \csa{} $C^*(E)$, the class of the unit $[1_{C^*(E)}]$ in $\K_0\bigl(C^*(E)\bigr)$ is sent, via the canonical isomorphism $\K_0\bigl(C^*(E)\bigr)\to\coker D_E$, to the class $[\mathbb 1]=\begin{pmatrix} 1 & 1 & \cdots & 1 \end{pmatrix} + \im D_E$, where $1+D_E^t$ denotes the adjacency matrix for~$E$,~\cite{tomforde}*{Theorem 2.2}.
Using this and the formula of~\cite{cet}, see~\S\ref{ssec:calc}, the unital reduced filtered $\K$\nb-theory of a unital graph \csa{} can be calculated.
A graph \csa{} $C^*(E)$ is unital if and only if its underlying graph $E$ has finitely many vertices.  So by the formula of \cite{cet}, a unital graph \csa{} and its subquotients always have finitely generated $\K$\nb-theory.

\begin{theorem} \label{thm:range_unit}
Let $X$ be a finite $T_0$-space, and let $(M,m)$ be an exact pointed $\Catgunnar$\nb-mod\-ule.  Assume that for all $x\in X$, $M(\gsi x)$ is a free abelian group,
\[ \coker (M(\gobd x)\xrightarrow{i_{\gobd x}^{\gosi x}} M\bigl(\gosi x)\bigr) \]
 is finitely generated, and $\rank M(\gsi x)\leq \rank \coker (M(\gobd x)\xrightarrow{i_{\gobd x}^{\gosi x}} M\bigl(\gosi x)\bigr)$.

Then there exists a countable graph $E$ such that all vertices in $E$ support at least two loops, the set $E^0$ is finite, the \csa{} $C^*(E)$ is tight over $X$ and the pointed $\Catgunnar$\nb-mod\-ule $\FKgunnar^\unit\bigl(C^*(E)\bigr)$ is isomorphic to $(M,m)$. By construction $C^*(E)$ is unital and purely infinite.

The graph $E$ can be chosen such that all of its vertices are regular if and only if the rank of $M(\gsi x)$ coincides with the rank of the cokernel of $i_{\gobd x}^{\gosi x}\colon M(\gobd x)\to M(\gosi x)$ for all $x\in X$.  
If $E$ is chosen to have regular vertices, then by construction $C^*(E)$ is a Cuntz--Krie\-ger algebra.
\end{theorem}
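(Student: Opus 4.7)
The plan is to refine the inductive construction from the proof of Theorem~\ref{thm:range} so that all vertex sets stay finite, using the rank inequality together with a rectangular variant of the matrix realisations, and so that the unit class of the resulting graph \csa{} matches $m$. I first pick a representative $(m_x)_{x\in X}\in\bigoplus_{x\in X} M(\gosi x)$ of $m$ and write $\bar m_x\in M(\rsi x)$ for the image of $m_x$ under the quotient map $r_{\gosi x}^{\rsi x}$.

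The main preparatory step is a pointed, possibly rectangular, refinement of \cite{ektw}*{Proposition 3.3}: for each $x\in X$, produce a finite set $V_x$ and a matrix $D_x$ whose associated graph has every vertex supporting at least two loops, together with isomorphisms $\phi_{\gsi x}\colon M(\gsi x)\to\ker D_x$ and $\phi_{\rsi x}\colon M(\rsi x)\to\cok D_x$ satisfying the additional condition $\phi_{\rsi x}(\bar m_x)=[\mathbb 1_{V_x}]$. The rank hypothesis $\rank M(\gsi x)\le\rank M(\rsi x)$ permits $D_x$ to be rectangular when needed; when the ranks agree, $D_x$ can be chosen square. The pointed condition can be arranged by starting from any matrix presenting $M(\rsi x)$ and enlarging $V_x$ by at most one additional generator absorbing the discrepancy between its all-ones vector and $\bar m_x$, then adjusting $D_x$ so as to preserve both the kernel and the two-loop condition.

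With these block choices fixed, I carry out the inductive construction from the proof of Theorem~\ref{thm:range} essentially verbatim: at each step I apply Theorem~\ref{ektw} to produce non-negative off-diagonal blocks $H_{yx}$ realising the appropriate six-term sequence and extending the $\Catgunnar$\nb-module isomorphism. Since every $V_x$ is finite, the resulting graph $E=E(1+D_X)$ has finite vertex set, and so $C^*(E)$ is unital, purely infinite, tight over $X$ and satisfies $\FKgunnar(C^*(E))\cong M$. When all $D_x$ are square, the graph is additionally row-finite with no sources (by the two-loop condition), so $C^*(E)$ is a \CK.

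Finally, I verify that the unit matches $m$. By \cite{tomforde}*{Theorem 2.2} together with the general formula of \cite{cet}, the class $[1_{C^*(E)}]\in\cok D_X\cong\K_0(C^*(E))$ is represented by the all-ones vector $\mathbb 1_{E^0}$. Using Lemma~\ref{lem:extend} to propagate the inductively built isomorphism to all locally closed subsets, and Lemma~\ref{lem:unitexact} to identify the cokernel in the definition of $\FKgunnar^\unit$ with $\K_0$, this class is traced back to the image of $([\mathbb 1_{V_x}])_{x\in X}=(\phi_{\rsi x}(\bar m_x))_{x\in X}$, which by construction represents $m$. The main technical obstacle I expect is the pointed rectangular refinement of \cite{ektw}*{Proposition 3.3}, together with ensuring that the locally made all-ones choices assemble into the correct global unit; the latter may require exploiting the freedom to modify the representative $(m_x)_{x\in X}$ of $m$ along the inductive construction, using the relations in the cokernel arising from the elements $y\in\inf(x,x')$.
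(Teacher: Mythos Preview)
Your outline has a real gap at the unit-matching step. Fixing $\phi_{\rsi x}(\bar m_x)=[\mathbb 1_{V_x}]$ at each simple subquotient only pins down $\phi_{\gosi x}(m_x)$ modulo the image of $i_{\gobd x}^{\gosi x}$; running the induction of Theorem~\ref{thm:range} ``verbatim'' (that is, invoking only Theorem~\ref{ektw}, i.e.\ \cite{ektw}*{Propositions~4.3 and~4.7}) gives no control over which lift you obtain. Consequently $\sum_x\iota_{\osi x}^X\bigl(\phi_{\gosi x}(m_x)\bigr)$ can differ from $[\mathbb 1_{E^0}]$ by an element coming from some $\cok D_{\obd x}$, and the freedom to change the representative $(m_x)$ does not absorb such errors: the relations defining the cokernel in $\Mod(\Catgunnar)^\pt$ move contributions between \emph{different} components $x,x'$ through a common $y\in\inf(x,x')$, whereas the discrepancy you need to kill sits inside a single component. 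Already for the two-point space with $2\to 1$ this fails: $M(\gosi 1)=\FK_X^0$, and Theorem~\ref{ektw} produces $\phi_{\gosi 1}$ sending $m_1$ to $[\mathbb 1_{V_1}]+\iota(\eta)$ for an uncontrolled $\eta\in\cok D_{\{2\}}$.

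The missing tool is \cite{ektw}*{Proposition~4.8}, which strengthens Theorem~\ref{ektw} so that in the inductive step one may additionally prescribe the image of a chosen element under the new isomorphism $\phi_{\gosi{x_k}}$. The paper first realises $M$ as $\FKgunnar(B)$ via Theorem~\ref{thm:range}, identifies $m$ with an element of $\K_0(B)$, and then tracks the unit not through the simple subquotients but through the increasing chain of \emph{closed} subsets $C_k\subseteq U_k$, setting $m_{C_k}=r_X^{C_k}(m)$. At each step where a closed point $x_k$ is added, a lift $\widetilde m_{\osi{x_k}}\in\FK_{\osi{x_k}}^0(B)$ is manufactured from $m_{C_k}$ and the previously achieved condition $\phi_{C_{k-1}}^0(m_{C_{k-1}})=[\mathbb 1]$, and \cite{ektw}*{Proposition~4.8} is invoked to force $\phi_{\osi{x_k}}^0(\widetilde m_{\osi{x_k}})=[\mathbb 1]$ in $\cok D'_{\osi{x_k}}$; this is what makes the local all-ones conditions assemble to $[\mathbb 1_{E^0}]$. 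Your ``pointed refinement of \cite{ektw}*{Proposition~3.3}'' is essentially \cite{ektw}*{Proposition~3.6}, and it is only needed in pointed form at the closed points.
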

\begin{proof}
The proof is carried out by the same strategy as the proof of Theorem~\ref{thm:range}.  However, we here construct graphs that may have singular vertices.  We refer to~\cite{cet} for the formula for calculating six-term exact sequences in $\K$\nb-theory for such graph \csa{}s.

By Theorem~\ref{thm:range} there exists a \csa{} $B$ of real rank zero with $M\cong\FKgunnar(B)$, so we may assume that $M=\FKgunnar(B)$.
Since, by Lemma~\ref{lem:unitexact},
\[
\coker\Bigl(\bigoplus_{y\in\inf(x,x')}M(\gosi y)\xrightarrow{\begin{pmatrix} i_{\gosi y}^{\gosi x} & -i_{\gosi y}^{\gosi x'} \end{pmatrix}} \bigoplus_{x\in X} M(\gosi x,0)\Bigr)
\]
is isomorphic to $\FK_X^0(B)=\K_0(B)$, we may identify $m$ with its image in $\FK_X^0(B)$ but note that this image may not be~$[1_B]$.

The construction is similar to the construction in the proof of Theorem~\ref{thm:range}. Let $x_1$ be an open point in $X$ and define $U_1=\{x_1\}$.  Define $U_k$ recursively by choosing an open point $x_k$ in $X\setminus U_{k-1}$ and defining $U_k=U_{k-1}\cup\{x_k\}$.  Let $C_k$ denote the largest subset of $U_k$ that is closed in $X$.  Observe that
\[ C_k = X\setminus \bigcup_{y\in\clo(X)\setminus U_k} \osi y \]
where $\clo(X)$ denotes the closed points in $X$.  And observe that if $x_k$ is closed in $X$ then
$C_k\setminus C_{k-1}\subseteq \osi{x_k}$ and $C_k\setminus\osi{x_k}= C_{k-1}$, and otherwise $C_k=C_{k-1}$.
Define for all closed subsets $C$ of $X$ the element $m_C$ of $\FK_C^0(B)$ as the image of $m$ under $r\colon \FK_X^0(B)\to \FK_C^0(B)$.

For each $x$ not closed in $X$, choose by Proposition~3.6 of~\cite{ektw} a graph $E_x$ that is transitive, has finitely many vertices that all support at least two loops, and such that $\K_1\bigl(C^*(E_x)\bigr)$ is isomorphic to $\FK_{\si x}^1(B)$ and $\K_0\bigl(C^*(E_x)\bigr)$ is isomorphic to $\FK_{\si x}^0(B)$.
Define $V_{\si x}=E_x^0$ and $V'_{\si x}=(E_x^0)_{\textnormal{reg}}$,
let $D_{\si x}\in M_{V_{\si x}}(\Z_+\cup\{\infty\})$ such that $1+D^t_{\si x}$ is the adjacency matrix for $E_x$, and let $D'_{\si x}$ denote the $V'_{\si x}\times V_{\si x}$ matrix defined by $D'_{\si x}(v,w)=D_{\si x}(v,w)$.
If $\rank \FK_{\si x}^1(B)=\rank \FK_{\si x}^0(B)$ then $V_{\si x}=V'_{\si x}$.
Let isomorphisms $\phi_{\si x}^1\colon \FK_{\si x}^1(B)\to\ker D'_{\si x}$ and $\phi_{\si x}^0\colon \FK_{\si x}^0(B)\to\coker D'_{\si x}$ be given.
For $x$ closed in $X$ we may by Proposition~3.6 of~\cite{ektw} choose $E_x$ and $\phi_{\si x}^0$ such that furthermore $\phi_{\si x}^0(m_{\si x})=[\mathbb 1]$.

Define
\[ V_U=\bigcup_{x\in U}V_{\si x},  V'_U=\bigcup_{x\in U}V'_{\si x} \]
for all $U\in\Op(X)$.
As in the proof of Theorem~\ref{thm:range} we wish to construct for all $x,y\in X$ with $x\neq y$, $V_{\si y}'\times V_{\si x}$ matrices $H_{yx}'$ over $\Z_+$ with $H_{yx}'\neq 0$ if and only if $y>x$.
When having constructed such $H_{yx}'$, we construct a $V_{\si y}\times V_{\si x}$ matrix $H_{yx}$ over $\Z_+\cup\{\infty\}$ by
\[ H_{yx}(v,w) = \begin{cases}
H_{yx}'(v,w) & v\in V_{\si y}' \\
\infty & v\in V_y\setminus V_{\si y}' \textnormal{ and }  y>x \\
0 & v\in V_y\setminus V_{\si y}' \textnormal{ and } y<x .
\end{cases} \]
We then define for $U\in\Op(X)$, a $V_U'\times V_U$ matrix $D_U'$ over $\Z_+$ and a matrix $D_U\in M_{V_U}(\Z_+\cup\{\infty\})$ by
\[ D_U'(v,w) = \begin{cases}
D_{\si x}'(v,w) & v\in V_{\si x}', w\in V_{\si x} \\
H_{yx}'(v,w) & v\in V_{\si y}', w\in V_{\si x}
\end{cases} \]
and 
\[ D_U(v,w) = \begin{cases}
D_{\si x}(v,w) & v,w\in V_{\si x} \\
H_{yx}(v,w) & v\in V_{\si y}, w\in V_{\si x} .
\end{cases} \]

For a matrix $D$, we denote by $E(D)$ the graph with adjacency matrix $D^t$.
Since the graph $E({1+D_{\si x}})$ has a simple graph \csa{}, it cannot contain breaking vertices.  By construction, neither will $E(1+{D_X})$.  So $E({1+D_X})$ will be tight over $X$ as in the proof of Theorem~\ref{thm:range}.  The matrices $(H_{yx_k}')_{y\in\obd{x_k}}$ will be constructed recursively over $k\in\{1,\ldots, n\}$ so that the following holds:
For each $x\in X$, there are isomorphisms making the diagram
\begin{equation}
\begin{split}
\xymatrix{
\FK_{\si x}^1(B)\ar[r]^-{\delta_{\si x}^{\obd x}}\ar[d]^-{\phi_{\si x}^1}_-{\cong} & \FK_{\obd x}^0(B)\ar[r]^-{i_{\obd x}^{\osi x}}\ar[d]^-{\phi_{\obd x}^0}_-{\cong} & \FK_{\osi x}^0(B)\ar[d]^-{\phi_{\osi x}^0}_-{\cong} \\
\ker D'_{\si x}\ar[r]^-{\Delta_{\si x}^{\obd x}} & \coker D'_{\obd x}\ar[r]^-{\iota_{\obd x}^{\osi x}} & \coker D'_{\osi x}
}
\end{split}
\label{unitrange:FKx}
\end{equation}
commute and satisfying for all $y\to x$ that
\begin{equation}
\begin{split}
\xymatrix{
\FK_{\osi y}^0(B)\ar[r]^-{i_{\osi y}^{\obd x}}\ar[d]^-{\phi_{\osi y}^0}_-{\cong} & \FK_{\obd x}^0(B)\ar[d]^-{\phi_{\obd x}^0}_-{\cong} \\
\coker D'_{\osi y}\ar[r]^-{\iota_{\osi y}^{\obd x}} & \coker D'_{\obd x}
}
\end{split}
\label{unitrange:FKxy}
\end{equation}
commutes, and for all $k\in\{1,\ldots,n\}$ that the isomorphism
\[ \phi_{C_k}^0\colon \FK_{C_K}^0(B)\to\coker D'_{C_k} \]
induced by $(\phi_{\osi x}^0)_{x\in X}$, see Lemma~\ref{lem:extend}, sends $m_{C_k}$ to $[\mathbb 1]$.

Assume that the matrices $(H_{yx_i}')_{y\in\obd{x_i}}$ have been constructed for all $i<k$.  Then isomorphisms $(\phi_Y^0)_{Y\in\LC(U_{k-1})}$ are induced by Lemma~\ref{lem:extend}.

Assume that $x_k$ is a closed point in $X$.
Since $D_{C_k\setminus\si{x_k}}$ is already defined, we may define $1_{C_k\setminus\osi{x_k}}$ as the element in $\Z^{V_{C_k\setminus\si{x_k}}}$ with
\[ 1_{C_k\setminus\osi{x_k}}(i) =
\begin{cases}
1 & \textnormal{if }i\in V_{C_k\setminus\osi{x_k}} \\
0 & \textnormal{if }i\in V_{(C_k\cap\osi{x_k})\setminus\si{x_k}}.
\end{cases} \]
Define $\widetilde m_{C_k\setminus\si{x_k}}$ as the preimage of $1_{C_k\setminus\osi{x_k}}$ under the isomorphism $\phi_{C_k\setminus\si{x_k}}^0$.
Notice that $m_{C_k}r_{C_k}^{C_{k-1}}\phi_{C_{k-1}}^0=[\mathbb 1]$ and that, since $C_{k-1}$ is closed in $C_k\setminus\si{x_k}$,
\begin{align*}
\widetilde m_{C_k\setminus\si{x_k}}i_{C_k\setminus\si{x_k}}^{C_k}r_{C_k}^{C_{k-1}}\phi_{C_{k-1}}^0
&=\widetilde m_{C_k\setminus\si{x_k}}r_{C_k\setminus\si{x_k}}^{C_{k-1}}\phi_{C_{k-1}}^0 \\
&= \widetilde m_{C_k\setminus\si{x_k}}\phi_{C_k\setminus\si{x_k}}^0r_{C_k\setminus\si{x_k}}^{C_{k-1}} \\
&= 1_{C_k\setminus\osi{x_k}}r_{C_k\setminus\si{x_k}}^{C_{k-1}} = [\mathbb 1] ,
\end{align*}
so by injectivity of the map $\phi_{C_{k-1}}^0$, the element $m_{C_k}-\widetilde m_{C_k\setminus\si{x_k}}i_{C_k\setminus\si{x_k}}^{C_k}$ lies in $\ker r_{C_k}^{C_{k-1}}=\im i_{C_k\cap\osi{x_k}}^{C_k}$.
Choose $\widetilde m_{C_k\cap\osi{x_k}}$ in $\FK_{C_k\cap\osi{x_k}}^0(B)$ such that
\[ m_{C_k} =  m_{C_k\cap\osi{x_k}}i_{C_k\cap\osi{x_k}}^{C_k}  + \widetilde m_{C_k\setminus\si{x_k}}i_{C_k\setminus\si{x_k}}^{C_k} . \]
Choose $\widetilde m_{\osi{x_k}}$ in $\FK_{\osi{x_k}}^0(B)$ such that
\[
\widetilde m_{\osi{x_k}}r_{\osi{x_k}}^{C_k\cap\osi{x_k}} = \widetilde m_{C_k\cap\osi{x_k}} .
\]
Consider the diagram
\begin{equation}
\begin{split}
\xymatrix{
\FK_{\si{x_k}}^1(B) \ar[r]^-{\delta_{\si{x_k}}^{\obd{x_k}}}\ar[d]^-{\phi_{\si{x_k}}^1}_-{\cong} & \FK_{\obd{x_k}}^0(B)\ar[r]^-{i_{\obd{x_k}}^{\osi{x_k}}}\ar[d]^-{\phi_{\obd{x_k}}^0}_-{\cong} & \FK_{\osi{x_k}}^0(B)\ar[r]^-{r_{\osi{x_k}}^{\si{x_k}}}\ar@{.>}[d]^-{\phi_{\osi{x_k}}^0} & \FK_{\si{x_k}}^0(B) \ar[d]^-{\phi_{\si{x_k}}^0}_-{\cong} \\
\ker D'_{x_k} \ar[r] & \coker D'_{\obd{x_k}}\ar@{.>}[r] & \coker D'_{\osi x}\ar@{.>}[r] & \coker D'_{\si{x_k}},
} \label{unitrange:sixterm}
\end{split}
\end{equation}
where $\widetilde m_{\osi{x_k}}$ is mapped to $m_{\si{x_k}}$ which by $\phi_{\si{x_k}}^0$ is mapped to $[\mathbb 1]$.
As in the proof of Theorem~\ref{thm:range} we apply Theorem~\ref{ektw} to construct $D'_{\osi{x_k}}$ from $D'_{\si{x_k}}$ and $D'_{\obd{x_k}}$ by constructing nonzero matrices $H_{yx_k}$ for all $y\in\obd{x_k}$.  By Proposition~4.8 of~\cite{ektw} we may furthermore achieve that
$m_{\osi{x_k}}\phi_{\osi{x_k}}=[\mathbb 1]$.

That \eqref{unitrange:FKx} and \eqref{unitrange:FKxy} hold for $x_k$ follows immediately from the construction.  To verify that the map $\phi_{C_k}^0$ induced by $(\phi_{\osi y}^0)_{y\in U_k}$ satisfies $m_{C_k}\phi_{C_k}^0=[\mathbb 1]$, observe that the map $\phi_{C_k\cap\osi{x_k}}^0$ induced by $\phi_{\osi{x_k}}^0$ will map $\widetilde m_{C_k\cap\osi{x_k}}$ to $[\mathbb 1]$, and consider the commuting diagram
\[ \xymatrix@C+50pt{
\FK_{C_k\setminus\si{x_k}}^0(B)\oplus \FK_{C_k\cap\osi{x_k}}^0(B) \ar[r]^-{\scriptsize\begin{pmatrix} i_{C_k\setminus\si{x_k}}^{C_k} \\ i_{C_k\cap\osi{x_k}}^{C_k} \end{pmatrix}}\ar[d]^-{\phi_{C_k\setminus\si{x_k}}^0\oplus\phi_{C_k\cap\osi{x_k}}^0}_-{\cong} & \FK_{C_k}^0(B)\ar[d]^-{\phi_{C_k}^0}_-{\cong} \\
\coker D'_{C_k\setminus\si{x_k}}\oplus \coker D'_{C_k\cap\osi{x_k}} \ar[r]^-{\scriptsize\begin{pmatrix}  \iota_{C_k\setminus\si{x_k}}^{C_k} \\ \iota_{C_k\cap\osi{x_k}}^{C_k}  \end{pmatrix}} & \coker D'_{C_k}  .
}  \]
Since $(\widetilde m_{C_k\setminus\si{x_k}}, \widetilde m_{C_k\cap\osi{x_k}})$ is mapped to $m_{C_k}$ by $\begin{pmatrix}  i_{C_k\setminus\si{x_k}}^{C_k} \\ i_{C_k\cap\osi{x_k}}^{C_k}  \end{pmatrix}$ and to $(1_{C_k\setminus\osi{x_k}},[\mathbb 1])$ by $\phi_{C_k\setminus\si{x_k}}^0\oplus\phi_{C_k\cap\osi{x_k}}^0$, we see by commutativity of the diagram that $m_{C_k}\phi_{C_k}^0=[\mathbb 1]$.

For $k$ with $x_k$ not closed in $X$, $C_k$ equals $C_{k-1}$ and a construction similar to the one in the proof of Theorem~\ref{thm:range} applies. 
As in the proof of Theorem~\ref{thm:range}, Proposition~4.7 of~\cite{ektw} allows us to make sure that $H'_{yx_k}\neq 0$ when $y\in\obd{x_k}$.

Finally, we note that the constructed graph algebra $C^*(E(D_X+1))$ is purely infinite by~\cite{hongszymanski}*{Theorem~2.3} since all vertices in $E(D_X+1)$ support two loops and $E(D_X+1)$ has no breaking vertices.
Since the graph $E(D_X+1)$ has no sinks or sources, the graph algebra $C^*(E(D_X+1))$ is a \CK{} when $E(D_X+1)$ is finite.
\end{proof}

\begin{bibsection}
    \begin{biblist}

\bib{reduction}{article}{
  author = {Arklint, Sara E.},
  author = {Bentmann, Rasmus},
  author = {Katsura, Takeshi},
  title = {Reduction of filtered {$K$}-theory and a characterization of {C}untz--{K}rieger algebras},
  year = {2013},
  eprint = {arXiv:1301.7223v3},
  }

\bib{arklint_ruiz}{article}{
   author = {{Arklint}, Sara E.},
   author={{Ruiz}, Efren},
    title = {Corners of Cuntz--Krie\-ger algebras},
   eprint = {arXiv:1209.4336},
     year = {2012},
}

\bib{boyle_huang}{article}{
    AUTHOR = {Boyle, Mike},
    author ={Huang, Danrun},
     TITLE = {Poset block equivalence of integral matrices},
   JOURNAL = {Trans. Amer. Math. Soc.},
    VOLUME = {355},
      YEAR = {2003},
    NUMBER = {10},
     PAGES = {3861--3886 (electronic)},
      ISSN = {0002-9947},
       DOI = {10.1090/S0002-9947-03-02947-7},
       URL = {http://dx.doi.org/10.1090/S0002-9947-03-02947-7},
}

\bib{Bredon}{article}{
   author={Bredon, Glen E.},
   title={Cosheaves and homology},
   journal={Pacific J. Math.},
   volume={25},
   date={1968},
   pages={1--32},
   issn={0030-8730},
}

\bib{brownpedersen}{article}{
  author={Brown, Lawrence G.},
      AUTHOR = {Pedersen, Gert K.},
     TITLE = {{$C^*$}-algebras of real rank zero},
   JOURNAL = {J. Funct. Anal.},
    VOLUME = {99},
      YEAR = {1991},
    NUMBER = {1},
     PAGES = {131--149},
      ISSN = {0022-1236},
       DOI = {10.1016/0022-1236(91)90056-B},
       URL = {http://dx.doi.org/10.1016/0022-1236(91)90056-B},
}

\bib{cet}{article}{
    AUTHOR = {Carlsen, Toke Meier},
    author= {Eilers, S{\o{}}ren},
    author={Tomforde, Mark},
     TITLE = {Index maps in the {$\K$}-theory of graph \csas{}},
JOURNAL = {J. K-Theory},
    VOLUME = {9},
      YEAR = {2012},
    NUMBER = {2},
     PAGES = {385--406},
      ISSN = {1865-2433},
       DOI = {10.1017/is011004017jkt156},
       URL = {http://dx.doi.org/10.1017/is011004017jkt156},
}

\bib{cuntz}{article}{
   author={Cuntz, Joachim},
   title={A class of $C^{\ast} $-algebras and topological Markov chains.
   II. Reducible chains and the Ext-functor for $C^{\ast} $-algebras},
   journal={Invent. Math.},
   volume={63},
   date={1981},
   number={1},
   pages={25--40},
   issn={0020-9910},
   doi={10.1007/BF01389192},
}

\bib{cuntz_krieger}{article}{
    AUTHOR = {Cuntz, Joachim},
    author ={Krieger, Wolfgang},
     TITLE = {A class of {$C^{\ast} $}-algebras and topological {M}arkov
              chains},
   JOURNAL = {Invent. Math.},
    VOLUME = {56},
      YEAR = {1980},
    NUMBER = {3},
     PAGES = {251--268},
      ISSN = {0020-9910},
       DOI = {10.1007/BF01390048},
       URL = {http://dx.doi.org/10.1007/BF01390048},
}

\bib{drinentomforde}{article}{
    AUTHOR = {Drinen, Douglas},
    author = {Tomforde, Mark},
     TITLE = {The {$C^*$}-algebras of arbitrary graphs},
   JOURNAL = {Rocky Mountain J. Math.},
    VOLUME = {35},
      YEAR = {2005},
    NUMBER = {1},
     PAGES = {105--135},
      ISSN = {0035-7596},
       DOI = {10.1216/rmjm/1181069770},
       URL = {http://dx.doi.org/10.1216/rmjm/1181069770},
}

\bib{ektw}{article}{
author = {Eilers, S{\o{}}ren},
author={Katsura, Takeshi},
author={Tomforde, Mark},
author={West, James},
title={The ranges of {$\K$}-theoretic invariants for non-simple graph \csas{}},
eprint = {arXiv:1202.1989v1},
year={2012},
}

\bib{hongszymanski}{article}{
    AUTHOR = {Hong, Jeong Hee},
    AUTHOR ={Szyma{\'n}ski, Wojciech},
     TITLE = {Purely infinite {C}untz-{K}rieger algebras of directed graphs},
   JOURNAL = {Bull. London Math. Soc.},
    VOLUME = {35},
      YEAR = {2003},
    NUMBER = {5},
     PAGES = {689--696},
      ISSN = {0024-6093},
       DOI = {10.1112/S0024609303002364},
       URL = {http://dx.doi.org/10.1112/S0024609303002364},
}

\bib{MN:Filtrated}{article}{
  author={Meyer, Ralf},
  author={Nest, Ryszard},
  title={$C^*$-algebras over topological spaces: filtrated $\K$\nb-theory},
JOURNAL = {Canad. J. Math.},
    VOLUME = {64},
      YEAR = {2012},
    NUMBER = {2},
     PAGES = {368--408},
      ISSN = {0008-414X},
       DOI = {10.4153/CJM-2011-061-x},
       URL = {http://dx.doi.org/10.4153/CJM-2011-061-x},
}

\bib{MN:Bootstrap}{article}{
  author={Meyer, Ralf},
  author={Nest, Ryszard},
  title={$C^*$-algebras over topological spaces: the bootstrap class},
  journal={M\"unster J. Math.},
  volume={2},
  date={2009},
  pages={215--252},
  issn={1867-5778},
  review={\MRref {2545613}{}},
}

\bib{raeburn}{book}{
    AUTHOR = {Raeburn, Iain},
     TITLE = {Graph algebras},
    SERIES = {CBMS Regional Conference Series in Mathematics},
    VOLUME = {103},
 PUBLISHER = {Published for the Conference Board of the Mathematical
              Sciences, Washington, DC},
      YEAR = {2005},
     PAGES = {vi+113},
      ISBN = {0-8218-3660-9},
}

\bib{restorff}{article}{
    AUTHOR = {Restorff, Gunnar},
     TITLE = {Classification of {C}untz-{K}rieger algebras up to stable
              isomorphism},
   JOURNAL = {J. Reine Angew. Math.},
    VOLUME = {598},
      YEAR = {2006},
     PAGES = {185--210},
      ISSN = {0075-4102},
       DOI = {10.1515/CRELLE.2006.074},
       URL = {http://dx.doi.org/10.1515/CRELLE.2006.074},
}

\bib{Rordam:CK}{article}{
   author={R{\o}rdam, Mikael},
   title={Classification of Cuntz--Krie\-ger algebras},
   journal={$\K$\nb-Theory},
   volume={9},
   date={1995},
   number={1},
   pages={31--58},
   issn={0920-3036},
   doi={10.1007/BF00965458},
}


\bib{tomforde}{article}{
    AUTHOR = {Tomforde, Mark},
     TITLE = {The ordered {$K_0$}-group of a graph {$C^*$}-algebra},
   JOURNAL = {C. R. Math. Acad. Sci. Soc. R. Can.},
    VOLUME = {25},
      YEAR = {2003},
    NUMBER = {1},
     PAGES = {19--25},
      ISSN = {0706-1994},
}

   \end{biblist}
\end{bibsection}

\end{document}